\newcommand{\mod}{\, \ \mbox{mod}\, \ }
\newcommand{\abk}{\allowbreak}
\begin{document}

\mainmatter  
\title{Almost supplementary difference sets 
and quaternary sequences with optimal 
autocorrelation}

\renewcommand\rightmark{Almost supplementary
difference sets and quaternary sequences}

\author{J.~A.~Armario \and D.~L.~Flannery}

\institute{Departamento de Matem\'atica 
Aplicada I, 
Universidad de Sevilla,
Avda. Reina  Mercedes s/n, 
41012 Sevilla, Spain\\
\mailsa 
\and 
School of Mathematics, Statistics and Applied 
Mathematics, 
National University of Ireland~Galway, 
Galway H91TK33, 
Ireland\\ 
\mailsb
}
\maketitle

\begin{abstract} 
We introduce 
\emph{almost supplementary 
difference sets} (ASDS).
For odd $m$, certain ASDS in 
$\mathbb Z_m$ that have amicable 
incidence matrices
are equivalent to quaternary 
sequences of odd length 
$m$ with optimal autocorrelation.
As one consequence, if $2m-1$ 
is a prime power, or 
$m \equiv 1  \mod 4$ is prime, 
then ASDS of this kind exist. 
We also explore connections 
to optimal binary 
sequences and group cohomology.
\end{abstract}

\vspace{3mm}

\noindent{{\bf Mathematics Subject Classification}: 05B10$\;\cdot\;$ 05B20$\;\cdot\;$94A55}

\section{Introduction}\label{Introduction}
A sequence  
$\phi = (\phi(0),\ldots, \phi(n-1))$ 
with all entries in $\{\pm 1\}$ or
$\{\pm 1,\pm {\rm i}\}$, where 
$\mathrm{i} = \sqrt{-1}$, 
is called {\em binary} or {\em quaternary}, 
respectively.
For a non-negative integer $w$,
the {\em periodic autocorrelation of 
$\phi$ at shift $w$} is
\begin{equation}\label{PACSequences}
R_\phi(w) =\sum_{k=0}^{n-1} 
 \phi(k)\overline{\phi(k+w)},
\end{equation}
reading arguments modulo $n$;
the overline denotes  
complex conjugate as usual. 
It is easy to see that
\begin{equation}\label{boundautocorrbinary}
  \max_{0<w<n}|R_\phi(w)|\geq 
\left\{\begin{array}{cl}
0 & \hspace{10pt} n\equiv  0  \mod 4\\
1 & \hspace{10pt} n \equiv 1 \mod 2 \\ 
2 & \hspace{10pt} n \equiv 2 \mod 4
\end{array}\right. 
\end{equation}
when $\phi$ is binary, and
\begin{equation}\label{boundautocorrquaternary}
  \max_{0<w<n}|R_\phi(w)|\geq 
\left\{\begin{array}{cl}
0 & \hspace{10pt} n \,\mbox{ even }\\
1 & \hspace{10pt} n  \, 
\mbox{ odd }
\end{array}\right. 
\end{equation}
when $\phi$ is quaternary.
A complex sequence $\phi$ such that  
$R_\phi(w) =0$ for $0<w<n$ is said to be
{\em perfect}. 
Existence of a perfect binary (resp., quaternary) 
sequence is equivalent to existence of a 
Menon-Hadamard difference set in a cyclic 
group~\cite{Jed92} 
(resp., a semi-regular relative difference
set in a cyclic group with forbidden subgroup  
of size $2$~\cite{AdL01}).
No perfect binary (resp., quaternary) sequences 
of length $n>4$ (resp., $n>16$) are known; 
see \cite{AdLM02,Schmidt00}. 
Furthermore, if $p$ is an odd prime and $s>2$
then there do not exist perfect
sequences of length $p^s$ over $p$th roots of
unity~\cite{MN09}.

Setting aside perfect sequences,
we enlarge the notion of optimality consistent
with (\ref{boundautocorrbinary}) and 
(\ref{boundautocorrquaternary});
cf.~\cite[p.~2940]{ADHKM01} and \cite{LSH03}. 
A binary sequence $\phi$ of length $n$ has
{\em optimal autocorrelation} if, for all
$w$, $0<w<n$:
\begin{enumerate}
\item[] $R_\phi(w)\in\{0,\pm 4\}$ 
\ ($n\equiv 0\mod 4$)

\vspace{1pt}

\item[] $R_\phi(w)\in\{1,-3\}$ \ 
($n\equiv 1\mod 4$)

\vspace{1pt}

\item[] 
$R_\phi(w)\in\{2,-2\}$ \ 
($n\equiv 2\mod 4$)

\vspace{1pt}

\item[] $R_\phi(w)=-1$ \ ($n\equiv 3\mod 4$).
\end{enumerate}
A quaternary sequence $\phi$  of
length $n$ has optimal 
autocorrelation---we say that $\phi$ is 
an OQS ({\em optimal quaternary sequence})---if
\begin{itemize}
\item[] 
$|R_\phi(w)|=1$ for all $w$, 
$0<w<n$ ($n$ odd)

\vspace{1pt}

\item[] $\max_{0<w<n}|R_\phi(w)|=2$ 
($n$ even). 
\end{itemize}
Actually, we will see that
if $n$ is odd and $\phi$ is an
OQS then $R_{\phi} (w)$ is real.

Table~\ref{Ex-quater-seq} records  
some existence data, extracted from 
Tables~II and IV of \cite{LSH03},
about odd length sequences 
with optimal autocorrelation. 
There are infinite families 
in all cases bar one, namely 
binary sequences $\phi$ of 
length $n\equiv 1 \mod 4$ with 
$|R_\phi(w)|=1$ for $0<w<n$.
Here examples are known only
for $n=5$ and $n=13$.
\begin{table}[h]\label{Ex-quater-seq} 
\begin{center}
\caption{Optimal sequences of 
odd length $n$ ($p$, $q$, $r$,
$q+4$, $r+2$ are prime)}
\label{tab:table1}
\begin{tabular}{|c|c|c|c|c|}\hline
\multirow{2}{*}{$n\mod 4$} & 
\multicolumn{2}{c|}{Binary} & 
\multicolumn{2}{c|}{Quaternary}\\ 
\cline{2-5}
& $\max|R_{\phi}(w)|$ 
& $n$ & $\max |R_{\phi}(w)|$ 
& $n$\\ 
\hline\hline
$1$ & $\begin{array}{c}
 1 \\[.5mm]
 3
 \end{array}$ & $\begin{array}{c}
 5,\, 13\\[.5mm] 
 p, \, q(q+4)
 \end{array}$ 
 & $1$ &
 $ \frac{p^a+1}{2}, \, p$
 \\\hline 
 $3$ & $1$ & 
 $\begin{array}{c}
 p \\
 2^a-1\\
 r(r+2)
 \end{array}$ 
 & $1$ 
 & $\frac{p^a+1}{2}$ \\ 
 \hline
\end{tabular}
 \end{center}
\end{table}

Binary sequences of length $2m$ with 
optimal `odd autocorrelation' find
practical applications in communication 
systems. 
The paper \cite{YT18} gives a procedure 
to construct such a binary sequence from an
OQS of odd length $m$. 
More is true: we demonstrate that 
these binary and quaternary sequences
are equivalent.

Optimal binary sequences 
of (even or odd) length $n$ 
may be characterized 
in terms of difference sets and almost 
difference sets in 
$\mathbb{Z}_n$; see \cite{ADHKM01}. 
A similar result for quaternary sequences 
was lacking until now. 
We explain how to characterize quaternary 
sequences of odd length $n$ with optimal 
autocorrelation as 
\emph{almost supplementary difference sets} 
in $\mathbb{Z}_n$.
 
This paper is a 
natural successor to \cite{AF17,AF18}, which
initiated the theory of 
quasi-orthogonal cocycles
and their applications in design theory.
We obtain new existence results for such cocycles 
from a connection to optimal quaternary sequences.
 
\section{Quasi-orthogonal cocycles
 and optimal sequences}

Let $G$ and $U$ be finite groups, 
with $U$ abelian.
A map $\psi:G\times G\rightarrow  U$ 
such that 
\begin{equation}\label{condiciondecociclo}
\hspace{1pt}
\psi(g,h)\psi(gh,k)=\psi(g,hk)\psi(h,k)
\quad\ \forall 
\hspace{1pt} g,h,k\in G 
\end{equation}
is a {\em cocycle} over $G$. 
The set of cocycles under pointwise 
multiplication is an abelian group,
denoted $Z^2(G,U)$. 
Given any map 
$\phi : G\rightarrow U$, the
{\em coboundary} $\partial \phi\in Z^2(G,U)$ 
is defined by 
$\partial\phi(g,h)=\phi(g)^{-1}\phi(h)^{-1}\phi(gh)$.
The coboundaries form a subgroup $B^2(G,U)$ 
of $Z^2(G,U)$. All cocycles are 
assumed to be normalized, i.e., $\psi(1,1)=1$.

We display $\psi\in Z^2(G,U)$ 
as a {\em cocyclic matrix} 
$M_\psi = \allowbreak [\psi(g,h)]_{g,h\in G}$. 
If $U = \langle -1\rangle\cong \mathbb Z_2$
and $M_\psi$ is a Hadamard matrix 
then $\psi$ is \textit{orthogonal};
in that event of course
$|G|=2$ or $|G|\equiv 0 \mod 4$.

The {\it row excess} $RE(M)$ of a cocyclic matrix $M$
indexed by $G$ is 
the sum of the absolute values of 
all row sums, apart from the
row indexed by $1_G$.
Using (\ref{condiciondecociclo}), it may be
shown that $\psi$ is orthogonal precisely when
$RE(M_\psi)$ is least, i.e.,
$RE(M_\psi)=0$. 

Henceforth we will treat mainly 
 the case $|G| \equiv 2 \mod 4$.
\begin{proposition}[{\cite[Proposition~1]{AF17}}]
Let $|G|=4t+2>2$.
If $\psi\in Z^2(G,\mathbb{Z}_2)$ then 
$RE(M_\psi)\geq \allowbreak 4t$, 
whereas $RE(M_\psi)\geq 8t+2$
if $\psi\in B^2(G,\mathbb{Z}_2)$.
\end{proposition}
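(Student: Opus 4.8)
The plan is to control $RE(M_\psi)$ through a congruence satisfied by the row sums of $M_\psi$, using that $H^2(G,\mathbb{Z}_2)$ is as small as possible when $|G|\equiv 2\mod 4$.

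Write $\rho_g=\sum_{h\in G}\psi(g,h)$ for the $g$-th row sum. Since $M_\psi$ has entries in $\langle -1\rangle$ and $|G|=4t+2$ is even, each $\rho_g$ is an even integer, $\rho_{1_G}=|G|$, and $RE(M_\psi)=\sum_{g\neq 1_G}|\rho_g|$ is a sum of $4t+1$ non-negative even integers. The observation I would establish first is a mod-$4$ identity for coboundaries: if $\phi\colon G\to\langle -1\rangle$ and $\psi=\partial\phi$, then $\rho_g=\phi(g)\sum_{h\in G}\phi(h)\phi(gh)$, and writing $\phi=1-2\cdot\mathbf{1}_D$ with $D=\phi^{-1}(-1)$ expands the inner sum as $|G|-4|D|+4\,|\{h\in D:gh\in D\}|$, which is $\equiv|G|\equiv 2\mod 4$. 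Hence $|\rho_g|\equiv 2\mod 4$, so $|\rho_g|\geq 2$, for \emph{every} $g$; summing over $g\neq 1_G$ gives $RE(M_\psi)\geq 2(|G|-1)=8t+2$. This already proves the second assertion (the case $\psi\in B^2(G,\mathbb{Z}_2)$), and \emph{a fortiori} $RE(M_\psi)\geq 4t$ for coboundaries.

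For a general $\psi\in Z^2(G,\mathbb{Z}_2)$ I would reduce to the coboundary case on a large subgroup. As $|G|=2m$ with $m$ odd, an involution of $G$ acts in the left-regular representation as a product of $m$ transpositions, hence as an odd permutation, so $G$ has a normal subgroup $N$ of index $2$; since $|N|=m$ is odd, $H^2(N,\mathbb{Z}_2)=0$, and the five-term inflation--restriction sequence for $1\to N\to G\to\mathbb{Z}_2\to 1$ collapses to give $H^2(G,\mathbb{Z}_2)\cong H^2(\mathbb{Z}_2,\mathbb{Z}_2)\cong\mathbb{Z}_2$. A representative of the nontrivial class is the inflation $\psi_0$ of the generator, i.e.\ $\psi_0(g,h)=-1$ precisely when $g\notin N$ and $h\notin N$; in particular $\psi_0(g,h)=1$ whenever $g\in N$. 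Thus $\psi$ equals $\partial\phi$ or $\psi_0\cdot\partial\phi$ for some $\phi$. In the first case we are done by the previous paragraph; in the second, for every $g\in N$ we have $\psi(g,h)=\partial\phi(g,h)$ for all $h\in G$, so again $\rho_g=\phi(g)\sum_{h\in G}\phi(h)\phi(gh)$ and the congruence forces $|\rho_g|\geq 2$. Discarding all other terms of $RE(M_\psi)=\sum_{g\neq 1_G}|\rho_g|$ then yields $RE(M_\psi)\geq\sum_{g\in N\setminus\{1_G\}}|\rho_g|\geq 2(|N|-1)=2(m-1)=4t$.

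The cocycle/coboundary bookkeeping and the mod-$4$ expansion are routine. The step that needs the most care is the cohomological input: confirming that every group of order $\equiv 2\mod 4$ has $H^2(\,\cdot\,,\mathbb{Z}_2)\cong\mathbb{Z}_2$ and exhibiting an explicit non-coboundary representative $\psi_0$ that is identically $1$ on $N\times G$. That $\psi_0$ is not a coboundary can be checked by restricting it to a subgroup of order $2$ not contained in $N$, where it becomes the generator of $H^2(\mathbb{Z}_2,\mathbb{Z}_2)$. Once this is in place, the bounds fall out by examining only the rows of $M_\psi$ indexed by $N$.
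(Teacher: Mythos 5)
Your proof is correct. Note that the paper itself offers no proof of this proposition --- it is quoted from \cite[Proposition~1]{AF17} --- so there is nothing internal to compare against; but your argument is sound and is essentially the natural one, consistent with the framework the paper uses later (your $\psi_0$ is exactly the cocycle $\lambda$ of Section~2.2, and your classification $Z^2(G,\mathbb{Z}_2)=B^2(G,\mathbb{Z}_2)\cup\psi_0 B^2(G,\mathbb{Z}_2)$ is what underlies the paper's basis $\{\lambda,\partial_2,\ldots,\partial_{2m-1}\}$). The two key computations check out: the expansion $\sum_h\phi(h)\phi(gh)=|G|-4|D|+4|D\cap g^{-1}D|\equiv 2\bmod 4$ forces $|\rho_g|\geq 2$ on every coboundary row, giving $8t+2$ over all $4t+1$ rows and $4t$ over the $2t$ nontrivial rows indexed by $N$ in the twisted case. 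One small point of rigor: the five-term inflation--restriction sequence by itself only gives injectivity of $H^2(G/N,\mathbb{Z}_2)\to H^2(G,\mathbb{Z}_2)$; to get $H^2(G,\mathbb{Z}_2)\cong\mathbb{Z}_2$ you should invoke the full Lyndon--Hochschild--Serre spectral sequence (which degenerates since $H^q(N,\mathbb{Z}_2)=0$ for $q\geq 1$, as $|N|$ is odd) or, more simply, injectivity of restriction to a Sylow $2$-subgroup on the $2$-primary part of $H^2$. This is standard and does not affect the validity of the argument.
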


By analogy with
orthogonal cocycles, 
we call $\psi$ {\em quasi-orthogonal}
if the row excess of $M_\psi$
is least possible:  
either $\psi\not \in B^2(G,\mathbb{Z}_2)$ 
and $RE(M_\psi)=4t$, or 
$\psi\in \abk B^2(G,\mathbb{Z}_2)$ and
$RE(M_\psi)=8t+2$.
The existence problem for quasi-orthogonal 
cocycles is open;
in contrast to the situation for 
orthogonal cocycles, 
we do not know of any group over which 
they do not exist.

\subsection{Generalized optimal binary arrays
and optimal quaternary sequences}

Let $G$ be the additive abelian group
${\mathbb{Z}}_{s_1}\times\cdots\times 
{\mathbb{Z}}_{s_r}$ where $s_i>1$ for 
all $i$, and put ${\bf s}=(s_1,\ldots,s_r)$. 
A (binary or quaternary) 
{\em ${\bf s}$-array} is simply a map
$\phi:G\rightarrow C$ where 
$C=\{\pm 1\}$ or $\{\pm 1,\pm \mathrm{i}\}$.
So a binary or quaternary 
sequence is an $\bf s$-array with $r=1$.

For a {\em type vector}
${\bf z}=(z_1,\ldots,z_r)\in \{ 0,1\}^r$,
let
\begin{eqnarray*}
& & E= {\mathbb{Z}}_{(z_1+1)s_1}\times
\cdots\allowbreak \times 
{\mathbb{Z}}_{(z_r+1)s_r},
\\
& & H=\{(h_1,\ldots, h_r) \in E\; |\;   
h_i=0 \ \mbox{if}\ z_i=0,\  \mbox{and} 
\ h_i=0\ \mbox{or} \ s_i\ 
\mbox{if}\ z_i=1\},
\\
& & K = \{ h\in H\; |\;  h\,
\mbox{ has even weight}\} .
\end{eqnarray*}
Then $K$ is a subgroup of the elementary
abelian $2$-subgroup $H$ of
$E$, and $E/H\cong G$. 
The {\em expansion}
of a \mbox{binary} $\bf s$-array $\phi$  
with respect to  ${\bf z}$ 
is the map $\phi'$ on $E$ defined by
\[
\phi'(x)=\left\{ 
\begin{array}{rl}
\phi(\tilde{x}) & \quad x\in \tilde{x} + K\\
-\phi(\tilde{x}) & \quad x\notin \tilde{x} +K
\end{array}\right.
\]
where $\tilde{x}$ denotes the projection of 
$x$ in $G$ (the $i$th component of $\tilde x$ 
is the $i$th component of $x$ reduced 
modulo $s_i$).

We extend the definition 
of periodic autocorrelation given in 
(\ref{PACSequences}) 
to arbitrary 
arrays $\varphi: A\rightarrow C$, 
i.e.,
\[
R_{\varphi}(a):=
\sum_{b\in A} \varphi(b)\overline{\varphi(a+b)}.
\]
A binary $\bf s$-array $\phi$ is a 
{\em generalized perfect binary array}
 (GPBA$({\bf s})$)
{\em of type ${\bf z}$} if 
\[
R_{\phi'}(x)=\abk 0 \quad \forall 
x \in E\setminus H.
\]
When ${\bf z}={\bf 0}$, this condition 
becomes $R_{\phi}( x )=\abk 0$ for all 
$x\in \abk G\setminus \{0\}$,
and if it holds then $\phi$ is a 
{\em perfect binary array}.
A GPBA$({\bf s})$ is equivalent to 
a relative difference set in 
$E/ K$ relative to $H/K$, thus
equivalent to
a cocyclic Hadamard matrix over $G$:
see \cite[Theorem~5.3]{Hug00} 
and \cite[Theorem~3.2]{Jed92}.
In particular, a binary array $\phi$
is perfect if and only if $\partial \phi$ 
is orthogonal.

Now we assume that $|G|\equiv 2 \mod 4$. 
 In particular, we assume that
$s_1/2, s_2, \abk \ldots, s_r$ are odd.
A {\em generalized optimal binary array
 of type ${\bf z}$} is
a binary $\bf s$-array $\phi$ such that
\begin{itemize}
\item[$\bullet$] 
$R_{\phi'}(x)\in
\{ 0, \pm 2 |H|\}\ \, \forall 
\hspace{.5pt} x \in E\setminus H$

\vspace{2.5pt}

\item[$\bullet$]  
$\big|\{x\in E \ | \ R_{\phi'}(x) 
= 0 \}\big|=|E|/2$ if $z_1=1$.
\end{itemize}
We write GOBA$({\bf s})$ for short.
A {\em generalized optimal binary sequence} 
(GOBS) is a GOBA($\bf s)$ with $r=z_1=1$. 

Since the abelian group $G$ does not 
have a canonical form as a direct product
of cyclic groups, the same array is a
GOBA($\bf s$) for various $\bf s$.
The following lemma reflects 
this fact (elements of
$\mathbb{Z}_2\times \mathbb{Z}_m$
and of $\mathbb{Z}_4\times \mathbb{Z}_m$
are denoted as ordered pairs;  context
will indicate which direct product is meant).
\begin{lemma}\label{gobs_goba}
Let $\varphi$ be a
binary sequence of length $2m$, $m>1$ odd. 
Define the $(2,m)$-array $\phi$ as follows.
For $m\equiv 1 \mod 4:$
\[
\phi(a,k)= \left\{\begin{array}{cl}
\varphi(k+am) &\quad 
k\equiv 0 \ \, \mathrm{mod} \ \, 4\\ 
(-1)^{1-a}\varphi(k+(1-a)m) 
& \quad
k\equiv 1 \ \, \mathrm{mod} \ \, 4\\
-\varphi(k+am) &\quad 
k\equiv 2 \ \, \mathrm{mod} \ \, 4\\ 
(-1)^a\varphi(k+(1-a)m) &\quad 
k\equiv 3 \ \, \mathrm{mod} \ \, 4
\end{array}\right. 
\]
and for $m \equiv 3  \mod 4:$ 
\[
\phi(a,k)= \left\{\begin{array}{cl}
(-1)^a\varphi(k+am) &
\quad 
k\equiv 0 \ \, \mathrm{mod} \ \, 4\\ 
\varphi(k+(1-a)m) & 
\quad
k\equiv 1 \ \, \mathrm{mod} \ \, 4\\
(-1)^{1-a}\varphi(k+am) &
\quad 
k\equiv 2 \ \, \mathrm{mod} \ \, 4\\ 
-\varphi(k+(1-a)m) &
\quad 
k\equiv 3 \ \, \mathrm{mod} \ \, 4.
\end{array}\right. 
\]
Then $\varphi$ is a GOBS 
if and only if $\phi$ is a
 GOBA$(2,m)$ of type $(1,0)$.
\end{lemma}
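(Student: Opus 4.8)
The plan is to transport everything through the Chinese Remainder Theorem (CRT) isomorphism $\sigma\colon\mathbb{Z}_{4m}\to\mathbb{Z}_4\times\mathbb{Z}_m$, $x\mapsto(x\bmod 4,\,x\bmod m)$, and to check that $\sigma$ carries the expansion $\phi'$ exactly onto the expansion $\varphi'$; the lemma then follows at once because periodic autocorrelation is a group-theoretic invariant. First I would unwind both optimality notions. On the GOBS side $\mathbf{s}=(2m)$ and $\mathbf{z}=(1)$, so $E=\mathbb{Z}_{4m}$, $G=\mathbb{Z}_{2m}$, $H=\{0,2m\}$, and $K=\{0\}$ because the generator $2m$ of $H$ has weight $1$; hence $\varphi'$ is the negaperiodic extension of $\varphi$, namely $\varphi'(x)=\varphi(x)$ for $0\le x<2m$ and $\varphi'(y+2m)=-\varphi'(y)$ for all $y$, and $\varphi$ is a GOBS exactly when $R_{\varphi'}(x)\in\{0,\pm 4\}$ for every $x\in\mathbb{Z}_{4m}\setminus\{0,2m\}$ and $R_{\varphi'}$ vanishes at exactly $|E|/2=2m$ points. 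On the GOBA$(2,m)$ side $\mathbf{s}=(2,m)$ and $\mathbf{z}=(1,0)$, so $E=\mathbb{Z}_4\times\mathbb{Z}_m$, $G=\mathbb{Z}_2\times\mathbb{Z}_m$, $H=\{(0,0),(2,0)\}$, $K=\{(0,0)\}$, and $\phi'(a,k)=(-1)^{\lfloor a/2\rfloor}\phi(a\bmod 2,\,k)$; in particular $\phi'$ is negaperiodic under the shift $(2,0)$, and $\phi$ is a GOBA$(2,m)$ of type $(1,0)$ exactly when $R_{\phi'}$ takes values in $\{0,\pm 4\}$ off $H$ and vanishes at exactly $|E|/2=2m$ points.

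Next I would observe that $\sigma(2m)=(2,0)$ since $m$ is odd, so $\sigma$ maps $H$ onto $H$; as $\sigma$ is an isomorphism of groups of order $4m$, we have $R_{\phi'}(\sigma(x))=R_{\phi''}(x)$ for all $x$, where $\phi'':=\phi'\circ\sigma$ is a binary sequence on $\mathbb{Z}_{4m}$. Consequently it suffices to prove the identity $\phi''=\varphi'$, for then the two lists of conditions above coincide term by term (the exceptional sets $H$ correspond under $\sigma$, and $|E|/2=2m$ on both sides).

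The real content is that identity, which I would establish by a direct computation. Fix $x\in\mathbb{Z}_{4m}$, set $a=x\bmod 4$, and let $k\in\{0,\dots,m-1\}$ be the residue of $x$ modulo $m$, so $\phi''(x)=(-1)^{\lfloor a/2\rfloor}\phi(a\bmod 2,\,k)$. I would substitute the defining formula for $\phi(a\bmod 2,k)$ according to the value of $k\bmod 4$, and recognize the resulting $\varphi$-value as $\varphi'(x)$ using three facts: $\varphi'$ agrees with $\varphi$ on $\{0,\dots,2m-1\}$; $\varphi'(y+2m)=-\varphi'(y)$; and, because $m\equiv\pm1\pmod 4$, each argument $k+cm$ occurring in the formula has residue $k$ modulo $m$ and residue $k\pm c$ modulo $4$, hence names $x$ or $x\pm 2m$ in $\mathbb{Z}_{4m}$. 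Running through the cases $k\bmod 4\in\{0,1,2,3\}$, separately for $m\equiv1$ and $m\equiv3\pmod 4$ (the subcases $a\in\{2,3\}$ reducing to $a\in\{0,1\}$ by negaperiodicity), the prefactors $(-1)^a$, $(-1)^{1-a}$, the leading signs in the formula, and the factor $(-1)^{\lfloor a/2\rfloor}$ cancel precisely, and every branch yields $\phi''(x)=\varphi'(x)$. (Should the computation instead give $\phi''=\chi\cdot\varphi'$ for a $\{\pm1\}$-valued character $\chi$ of $\mathbb{Z}_{4m}$, this would still suffice, since multiplication by a character leaves $|R_{\varphi'}|$ unchanged.)

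I expect this last verification to be the only real obstacle: one must keep the two moduli $4$ and $m$ apart, track which representative of a coset of $\langle 2m\rangle$ the quantity $k+cm$ names, and confirm the signs in each branch. Laying the check out as a small table indexed by $k\bmod 4$ and by $m\bmod 4$ should keep it under control. Everything else---the shapes of $E$, $H$, $K$, the invariance of autocorrelation under $\sigma$, and the coincidence $|E|/2=2m$ on the two sides---is routine.
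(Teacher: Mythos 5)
Your proposal is correct and follows essentially the same route as the paper's (very terse) proof: identify $\mathbb{Z}_4\times\mathbb{Z}_m$ with $\mathbb{Z}_{4m}$ via the CRT isomorphism and observe that the signs in the definition of $\phi$ are allocated exactly so that the expansion $\phi'$ pulls back to the negaperiodic expansion $\varphi'$, whence $|R_{\varphi'}|$ and $|R_{\phi'}|$ agree. Your write-up merely supplies the case-by-case sign verification (which does come out to $\phi''=\varphi'$ on the nose) that the paper leaves implicit.
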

\begin{proof}
The identification is based on
the isomorphism 
$\mathbb{Z}_4\times 
\mathbb Z_m\rightarrow 
\mathbb Z_{4m}$ defined
by $(1,1)\mapsto 1$. 
Signs are allocated
so that $|R_{\varphi'}|$ always 
agrees with 
$|R_{\phi'}|$.
\hfill $\Box$
\end{proof}

Recall that $f: \mathbb{Z}_m\rightarrow 
\{\pm 1, \pm \mathrm{i}\}$ of odd
length $m$ is an OQS if
$|R_f(w)|=1$ for all $w$,
$1\leq w \leq m-1$.
We proceed to establish the link between 
these quaternary sequences and binary 
arrays with optimal 
autocorrelation.
\begin{remark}\label{aqbs}
There is a one-to-one correspondence between 
the set of binary $(2,m)$-arrays
$\phi$ and the set of quaternary 
sequences $f$ on $\mathbb Z_m$, given 
by
\[
f(k)=\frac{1-\mathrm{i}}{2}
(\phi(0,k)+ \mathrm{i}\phi(1,k)),
\]
\[
\phi(a,k)=\left\{\begin{array}{ll}
\mbox{Re}(f(k))-\mbox{Im}(f(k))\, 
& \quad \mbox{if } a=0\\
\mbox{Re}(f(k))+\mbox{Im}(f(k))\, 
& \quad \mbox{if } a=1
\end{array}
\right. .
\]
Translating
between additive and multiplicative versions
of $\mathbb{Z}_2\times \mathbb{Z}_2$, we 
also observe that 
\[
f(k)=\mathrm{i}^{\Phi^{-1}(\frac{1-\phi(1,k)}{2},
\frac{1-\phi(0,k)}{2})}
\]
where 
$\Phi^{-1} \colon 
\mathbb Z_2\times \mathbb Z_2\rightarrow 
\mathbb Z_4$ is the inverse Gray mapping, i.e.,
$\Phi^{-1}(0,0)=0$, $\Phi^{-1}(0,1)=1$, $\Phi^{-1}(1,1)=2$, 
and $\Phi^{-1}(1,0)=3$.   
\end{remark}
\begin{lemma}\label{lrphp} 
For $f$, $\phi$ as
in Remark{\em ~\ref{aqbs}} and $0\leq w\leq m-1$,
\[
R_f(w)=
\frac{1}{4}(R_{\phi'}(0,w)-
\mathrm{i}R_{\phi'}(1,w))
\]
where $\phi'$ is the expansion of 
$\phi$ with respect to 
${\bf z}=(1,0)$.
\end{lemma}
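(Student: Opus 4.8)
The plan is to expand both sides in terms of the binary array values and check that the coefficients of $1$ and $\mathrm{i}$ match. First I would write $f$ via the formula in Remark~\ref{aqbs}, namely $f(k)=\frac{1-\mathrm{i}}{2}(\phi(0,k)+\mathrm{i}\phi(1,k))$, so that $\overline{f(k+w)}=\frac{1+\mathrm{i}}{2}(\phi(0,k+w)-\mathrm{i}\phi(1,k+w))$ (indices mod $m$). Multiplying, the scalar factor is $\frac{(1-\mathrm{i})(1+\mathrm{i})}{4}=\frac{1}{2}$, and expanding the bracket product gives four terms; collecting real and imaginary parts yields
\[
f(k)\overline{f(k+w)}=\tfrac12\big(\phi(0,k)\phi(0,k+w)+\phi(1,k)\phi(1,k+w)\big)+\tfrac{\mathrm{i}}{2}\big(\phi(1,k)\phi(0,k+w)-\phi(0,k)\phi(1,k+w)\big).
\]
Summing over $k\in\mathbb{Z}_m$ then expresses $R_f(w)$ as $\frac12$ times a sum of four "partial autocorrelations" of $\phi$, where in each the first coordinate is fixed at $0$ or $1$ and only $k$ runs over $\mathbb{Z}_m$.

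Next I would relate these partial sums over the fixed-first-coordinate "slices" to the full autocorrelation of the expansion $\phi'$ on $E=\mathbb{Z}_4\times\mathbb{Z}_m$. Since ${\bf z}=(1,0)$, we have $H=\{(0,0),(2,0)\}$ and $K=\{(0,0)\}$, so $\phi'$ is defined on all of $E$ by $\phi'(x)=\phi(\tilde x)$ when $x\in\tilde x+K$, i.e. on the "even" part, and $\phi'(x)=-\phi(\tilde x)$ on the rest; concretely $\phi'(a,k)=(-1)^{\lfloor a/2\rfloor}\phi(a\bmod 2,k)$ for $a\in\{0,1,2,3\}$ — here the projection of $(a,k)\in E$ to $G=\mathbb{Z}_2\times\mathbb{Z}_m$ sends $a$ to $a\bmod 2$. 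Then $R_{\phi'}(0,w)=\sum_{a=0}^{3}\sum_{k}\phi'(a,k)\overline{\phi'(a,k+w)}$, and since $w$ has first coordinate $0$ the shift keeps $a$ fixed, so the sign factors $(-1)^{\lfloor a/2\rfloor}$ cancel in pairs and each of the four values of $a$ contributes $\sum_k\phi(a\bmod 2,k)\phi(a\bmod 2,k+w)$. Thus $R_{\phi'}(0,w)=2\sum_k\phi(0,k)\phi(0,k+w)+2\sum_k\phi(1,k)\phi(1,k+w)$, which is exactly $4$ times the real-part sum above. A parallel computation with the shift $(1,w)$ — which now moves $a\mapsto a+1$, sending even-$a$ slices to odd-$a$ slices and vice versa, and flips exactly one sign factor in each of the four cross terms — gives $R_{\phi'}(1,w)=-2\sum_k\phi(1,k)\phi(0,k+w)+2\sum_k\phi(0,k)\phi(1,k+w)$ (up to an overall sign I would fix by a careful bookkeeping of the $K$-cosets), i.e. $-4$ times the imaginary-part sum. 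Combining, $R_f(w)=\frac14\big(R_{\phi'}(0,w)-\mathrm{i}R_{\phi'}(1,w)\big)$.

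The routine algebra is harmless; the one place that needs genuine care — and which I expect to be the main obstacle — is the sign bookkeeping in the $(1,w)$ case. One must track how the shift by $(1,w)$ interacts with membership in the cosets $\tilde x+K$ used to define $\phi'$: an index $(a,k)$ that lies in the $+$ part may be sent to the $-$ part, and this is precisely what converts the product $\phi'(a,k)\overline{\phi'(a+1,k+w)}$ into $-\phi(a\bmod 2,k)\phi((a+1)\bmod 2,k+w)$ for the right set of $a$'s, producing the minus sign on the imaginary part and making the $-\mathrm{i}$ (rather than $+\mathrm{i}$) appear in the statement. Getting every $(-1)$ in its correct place, and confirming the conjugation on $\phi'(a,k+w)$ is trivial because $\phi'$ is real-valued (so $\overline{\phi'}=\phi'$), is the crux; the rest is collecting terms.
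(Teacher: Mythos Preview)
Your approach is correct and is precisely the routine computation the paper has in mind (the paper's own proof reads simply ``Routine.''). Your identification $\phi'(a,k)=(-1)^{\lfloor a/2\rfloor}\phi(a\bmod 2,k)$ is right, and the sign you state for $R_{\phi'}(1,w)$ is already the correct one, so the hedging ``up to an overall sign'' is unnecessary: tracing $a=0,1,2,3$ gives $R_{\phi'}(1,w)=2\sum_k\phi(0,k)\phi(1,k+w)-2\sum_k\phi(1,k)\phi(0,k+w)$, which is exactly $-4$ times the imaginary part of $R_f(w)$, yielding the $-\mathrm{i}$ as claimed.
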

\begin{proof}
Routine.
\hfill $\Box$
\end{proof}

\begin{theorem} 
\label{OQSQOC}
A quaternary sequence $f$ of 
odd length $m$ is an OQS 
if and only if its corresponding
binary array $\phi$ 
is a GOBA$(2,m)$ 
of type $(1,0)$.
\end{theorem}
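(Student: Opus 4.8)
\noindent The plan is to specialise the expansion apparatus to $\mathbf{s}=(2,m)$, $\mathbf{z}=(1,0)$ and then to show that both the GOBA conditions and the OQS condition reduce to the same arithmetic statement about the integers $R_{\phi'}(0,w)$ and $R_{\phi'}(1,w)$. First I would record that for these parameters $E=\mathbb{Z}_4\times\mathbb{Z}_m$, the subgroup $H=\{(0,0),(2,0)\}$ has order $2$, $K$ is trivial, and $|E|=4m$. Writing elements of $E$ as $(j,w)$ with $j\in\{0,1,2,3\}$, the defining formula for the expansion gives $\phi'(j,w)=\phi(j\bmod 2,\,w)$ for $j\in\{0,1\}$ and $\phi'(j,w)=-\phi(j\bmod 2,\,w)$ for $j\in\{2,3\}$, so that $\phi'\big(x+(2,0)\big)=-\phi'(x)$ and hence $R_{\phi'}\big(x+(2,0)\big)=-R_{\phi'}(x)$ for all $x\in E$. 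Thus every value of $R_{\phi'}$ on $E$ is $\pm R_{\phi'}(0,w)$ or $\pm R_{\phi'}(1,w)$ for some $w$ with $0\le w\le m-1$, and I would note the anchors $R_{\phi'}(0,0)=|E|=4m$ and $R_{\phi'}(1,0)=0$ (the latter from Lemma~\ref{lrphp}, since $R_f(0)=m$ is real).

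Two facts then drive the argument. The first is Lemma~\ref{lrphp}: because $\phi'$ is $\{\pm1\}$-valued, $R_{\phi'}(0,w)$ and $R_{\phi'}(1,w)$ are integers, and $R_f(w)=\frac14\big(R_{\phi'}(0,w)-\mathrm{i}\,R_{\phi'}(1,w)\big)$ gives
\[
16\,|R_f(w)|^2 \;=\; R_{\phi'}(0,w)^2+R_{\phi'}(1,w)^2 .
\]
The second is a parity remark valid for every quaternary sequence of odd length $m$: writing $R_f(w)=a+b\,\mathrm{i}$ with $a,b\in\mathbb{Z}$, each of the $m$ summands $f(k)\overline{f(k+w)}$ lies in $\{\pm1,\pm\mathrm{i}\}$, so $a+b\equiv m\equiv 1\pmod2$; hence $|R_f(w)|^2=a^2+b^2\equiv a+b\equiv1\pmod2$ is odd, and in particular $R_f(w)\neq0$.

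For the direction GOBA $\Rightarrow$ OQS I would use only the first GOBA condition, which with $|H|=2$ reads $R_{\phi'}(x)\in\{0,\pm4\}$ for all $x\in E\setminus H$. Applying it to $x=(0,w)$ and $x=(1,w)$ for $1\le w\le m-1$ (all of which lie in $E\setminus H$) shows $16\,|R_f(w)|^2\in\{0,16,32\}$, that is, $|R_f(w)|^2\in\{0,1,2\}$; the parity remark then forces $|R_f(w)|^2=1$, so $f$ is an OQS.

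For the converse I would start from $|R_f(w)|=1$ for $1\le w\le m-1$, which gives $R_{\phi'}(0,w)^2+R_{\phi'}(1,w)^2=16$ with integer summands, hence exactly one of $R_{\phi'}(0,w),R_{\phi'}(1,w)$ equals $0$ and the other $\pm4$. Together with $R_{\phi'}(1,0)=0$ and $R_{\phi'}(j+2,w)=-R_{\phi'}(j,w)$, a case check over $j\in\{0,1,2,3\}$ verifies $R_{\phi'}(x)\in\{0,\pm4\}=\{0,\pm2|H|\}$ for all $x\in E\setminus H$, which is the first GOBA condition. For the second, I would count zeros: the fixed-point-free involution $x\mapsto x+(2,0)$ of $E$ preserves the zero set of $R_{\phi'}$, so the number of zeros of $R_{\phi'}$ on $E$ is twice the number of pairs $(j,w)$ with $j\in\{0,1\}$ and $R_{\phi'}(j,w)=0$; there is exactly one such zero for each of the $m-1$ shifts $w\neq0$, plus $R_{\phi'}(1,0)=0$, while $R_{\phi'}(0,0)=4m\neq0$, giving $m$ zeros and hence $2m=|E|/2$ zeros over $E$. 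Thus $\phi$ is a GOBA$(2,m)$ of type $(1,0)$. The only step that needs more than routine bookkeeping is the parity remark: without it $|R_f(w)|^2$ is only constrained to $\{0,1,2\}$, and the second GOBA condition by itself does not exclude the values $0$ and $2$.
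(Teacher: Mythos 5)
Your proof is correct and follows essentially the same route as the paper: reduce both the OQS and GOBA conditions to statements about the integer pair $\bigl(R_{\phi'}(0,w),R_{\phi'}(1,w)\bigr)$ via Lemma~\ref{lrphp}, use the antisymmetry $R_{\phi'}(x+(2,0))=-R_{\phi'}(x)$, and invoke a parity/congruence fact to exclude the degenerate cases. The only difference is cosmetic: where the paper asserts $R_{\phi'}(0,w)+R_{\phi'}(1,w)\equiv 4 \bmod 8$ without proof, you establish the equivalent fact that $|R_f(w)|^2$ is odd directly from the quaternary values, which makes the argument self-contained.
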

\begin{proof}
We have
\[
R_{\phi'}(2,w) = -R_{\phi'}(0,w), \ \,
R_{\phi'}(3,w) = -R_{\phi'}(1,w), \ \, 
\mbox{and} \ \,
R_{\phi'}(0,w)+\allowbreak R_{\phi'}(1,w)
\equiv 4 \mod 8.
\]
Thus, if $\phi$ is a GOBA$(2,m)$ 
of type $(1,0)$ then
$R_{\phi'}(0,w)$ and
$R_{\phi'}(1,w)$ cannot 
both be non-zero;
so $f$ is an OQS 
by Lemma~\ref{lrphp}.

Suppose that $f$ is an OQS.
By Lemma~\ref{lrphp}, 
again just one of $R_{\phi'}(0,w)$ 
or $R_{\phi'}(1,w)$ 
for $1\leq w \leq m-1$
is zero, while the
other is $\pm 4$. 
This also implies that the number 
of $x\in E$ such
that $R_{\phi'}(x)=0$ is 
$2m$, as required.
\hfill $\Box$
\end{proof}
 
\begin{corollary}\label{GOBS_OQS}
A quaternary sequence 
of odd length $m$ is an OQS 
if and only if the binary sequence to
which it corresponds
via Lemma{\em ~\ref{gobs_goba}} 
and Remark{\em ~\ref{aqbs}}
is a GOBS of length $2m$.
\end{corollary}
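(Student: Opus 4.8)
The plan is to obtain Corollary~\ref{GOBS_OQS} by composing the correspondence of Remark~\ref{aqbs}, the equivalence of Theorem~\ref{OQSQOC}, and the equivalence of Lemma~\ref{gobs_goba}. The composite map should take a quaternary sequence $f$ of odd length $m$ to the binary $(2,m)$-array $\phi$ of Remark~\ref{aqbs}, and then to the binary sequence $\varphi$ of length $2m$ from which $\phi$ is built according to the formulas in Lemma~\ref{gobs_goba}. The one point not yet fully explicit is that this last passage is reversible: we must know that $\varphi \mapsto \phi$ as defined in Lemma~\ref{gobs_goba} is a \emph{bijection} from the set of binary sequences of length $2m$ onto the set of binary $(2,m)$-arrays, so that every $\phi$ (in particular every GOBA$(2,m)$ of type $(1,0)$) arises from a unique $\varphi$.

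So the first step is to verify this bijectivity. Fix $m \equiv 1 \mod 4$; the case $m \equiv 3 \mod 4$ has identical form. In each of the four clauses defining $\phi$, the value $\phi(a,k)$ equals $\varepsilon\,\varphi(j)$, where the sign $\varepsilon$ depends only on $a$ and $k \bmod 4$ and the index $j$ equals $k+am$ (for $k \equiv 0,2 \bmod 4$) or $k+(1-a)m$ (for $k \equiv 1,3 \bmod 4$), read modulo $2m$. Hence $\varphi(j)$ is recovered from $\phi(a,k)$, and it suffices to check that $(a,k)\mapsto j$ is a bijection $\{0,1\}\times\mathbb{Z}_m \to \mathbb{Z}_{2m}$. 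This follows from $m$ being odd: the Chinese remainder map $j \mapsto (j \bmod 2,\, j \bmod m)$ is an isomorphism $\mathbb{Z}_{2m}\to\mathbb{Z}_2\times\mathbb{Z}_m$, and under it $(a,k)\mapsto j$ becomes, after the harmless relabelling $a\leftrightarrow 1-a$ on the classes $k\equiv 1,3$, the obvious identification of $\{0,1\}\times\mathbb{Z}_m$ with $\mathbb{Z}_2\times\mathbb{Z}_m$; the four clauses partition the domain according to $k \bmod 4$ and fit together precisely because $m$ is odd. Thus $\varphi\mapsto\phi$ is a bijection, and by Lemma~\ref{gobs_goba} it restricts to a bijection between GOBSs of length $2m$ and GOBA$(2,m)$s of type $(1,0)$.

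Granting the first step, the corollary is immediate. If $f$ is an OQS of odd length $m$, then its array $\phi$ from Remark~\ref{aqbs} is a GOBA$(2,m)$ of type $(1,0)$ by Theorem~\ref{OQSQOC}, hence equals the image under Lemma~\ref{gobs_goba} of a unique binary sequence $\varphi$ of length $2m$, which is therefore a GOBS. Conversely, a GOBS $\varphi$ of length $2m$ maps under Lemma~\ref{gobs_goba} to a GOBA$(2,m)$ $\phi$ of type $(1,0)$, and the quaternary sequence $f$ corresponding to $\phi$ via Remark~\ref{aqbs} is an OQS by Theorem~\ref{OQSQOC}; since the same correspondences are used in both directions, these constructions are mutually inverse.

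I expect the bijectivity check in the first step to be the only real obstacle: it requires confirming that the piecewise index map of Lemma~\ref{gobs_goba} is well defined and invertible as a map $\mathbb{Z}_{2m}\leftrightarrow\{0,1\}\times\mathbb{Z}_m$, and that the prescribed signs are mutually consistent. No autocorrelation computation is needed at this stage, since all of that work is already packaged into Theorem~\ref{OQSQOC} and Lemma~\ref{gobs_goba}; once the bijection is in place the proof is a formal composition of equivalences.
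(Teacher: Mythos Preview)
Your proposal is correct and follows the same route the paper intends: the corollary is stated there without proof, as an immediate consequence of composing Theorem~\ref{OQSQOC} with Lemma~\ref{gobs_goba} (and Remark~\ref{aqbs}). Your explicit bijectivity check is a welcome addition; note only that no relabelling $a\leftrightarrow 1-a$ is actually needed, since in both the even-$k$ and odd-$k$ clauses one has $j\equiv a\bmod 2$ and $j\equiv k\bmod m$, so $(a,k)\mapsto j$ is exactly the Chinese remainder isomorphism.
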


Previously, GOBS have appeared under 
other names. They are 
\emph{binary sequences with optimal
odd autocorrelation} in \cite{YT18} 
(elsewhere, `negaperiodic' 
replaces `odd').
Corollary~\ref{GOBS_OQS} furnishes
a method to construct
GOBS of length $2m$ from OQS of 
length $m$ that 
is simpler than the one in 
\cite[Construction~A, p.~389]{YT18}.
\begin{example}
Let $f=(-1,1,{\rm i},1,-{\rm i},1,{\rm i},1,-1)$. 
We calculate that 
\[
R_f= (9,-1,-1,-1, 1,1,-1,\abk -1,-1 ),
\]
so $f$ is an OQS of length $9$.
By Theorem~\ref{OQSQOC},
\[
{\footnotesize 
\left[\begin{array}{rrrrrrrrr}
-1 & \phantom{-}1 & -1 & \phantom{-}1 & 
1 & 1 & - 1 &  1 & -1\\
-1 & \phantom{-} 1 &  1 & \phantom{-}1 & 
-1 & \phantom{-} 1 & 1 & \phantom{-} 1 & -1
\end{array} 
\right]}
\]
is a GOBA($2,9$) of type $(1,0)$. 
Then by Lemma~\ref{gobs_goba}, 
\[
(-1, 1, 1, -1, 1, 1, 1, -1, 
-1, -1, -1, -1, 1, -1, -1, -1, 1, -1)
\]
is a GOBS of length $18$.
\end{example}

The next result was mentioned 
in the Introduction.
\begin{corollary}\label{vOQS}
If $f$ is an OQS
of odd length $m$ then
$R_f(w)=\pm 1$ for $1\leq w\leq m-1$.
\end{corollary}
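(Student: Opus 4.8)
The plan is to observe that $R_f(w)$ is a Gaussian integer of modulus $1$, hence one of the four units $1,-1,\mathrm{i},-\mathrm{i}$, and then to exclude $\pm\mathrm{i}$ by showing that $\mathrm{Im}\,R_f(w)$ is even. First I would note that for each $k$ both $f(k)$ and $f(k+w)$ lie in the cyclic group $\langle\mathrm{i}\rangle$, so $f(k)\overline{f(k+w)}=\mathrm{i}^{a_k}$ for some $a_k\in\{0,1,2,3\}$; summing over $k\in\mathbb{Z}_m$ gives $R_f(w)\in\mathbb{Z}[\mathrm{i}]$. Since $f$ is an OQS, $|R_f(w)|=1$, so the integers $A=\mathrm{Re}\,R_f(w)$ and $B=\mathrm{Im}\,R_f(w)$ satisfy $A^2+B^2=1$; it therefore suffices to prove that $B$ is even, for then $B=0$ and $R_f(w)=A=\pm1$. (The same reduction follows from Lemma~\ref{lrphp} together with the proof of Theorem~\ref{OQSQOC}, which already give $R_f(w)\in\{\,\frac14 R_{\phi'}(0,w),\ -\frac{\mathrm{i}}{4}R_{\phi'}(1,w)\,\}$ with the nonzero alternative equal to $\pm4$; in that formulation the remaining task is to show $R_{\phi'}(1,w)=0$.)

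For the key step, let $t\colon\mathbb{Z}_m\to\{0,1\}$ be defined by $t(k)=0$ if $f(k)\in\{1,-1\}$ and $t(k)=1$ if $f(k)\in\{\mathrm{i},-\mathrm{i}\}$. A product of a real unit and an imaginary unit is imaginary, while a product of two reals or of two imaginaries is real, so $f(k)\overline{f(k+w)}$ is purely imaginary exactly when $t(k)\neq t(k+w)$ and is real otherwise. Hence $B$ is a sum of terms $\pm1$, one for each $k$ with $t(k)\neq t(k+w)$. The number of such $k$ is congruent modulo $2$ to $\sum_{k\in\mathbb{Z}_m}(t(k)+t(k+w))=2\sum_{k}t(k)\equiv 0$, so $B$ is a sum of an even number of $\pm1$'s and is thus even. (Equivalently, $\prod_{k}f(k)\overline{f(k+w)}=\bigl|\prod_{k}f(k)\bigr|^2=1$, whence $\mathrm{i}^{\sum_k a_k}=1$, so $\sum_k a_k\equiv 0 \mod 4$; in particular the number of odd $a_k$, which is exactly the number of purely imaginary terms contributing to $R_f(w)$, is even.)

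I do not anticipate any genuine obstacle: once $B$ is shown to be even, the conclusion $R_f(w)=\pm1$ is forced by $A^2+B^2=1$. The only point needing a little care is the parity bookkeeping above, and — if one instead argues via Lemma~\ref{lrphp} and Theorem~\ref{OQSQOC} — keeping track of which of $R_{\phi'}(0,w)$, $R_{\phi'}(1,w)$ is the one constrained to vanish.
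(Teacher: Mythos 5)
Your proof is correct, but it takes a genuinely different route from the paper's. The paper deduces the corollary from its sequence/array machinery: by Lemma~\ref{lrphp}, $R_f(w)=\frac14\bigl(R_{\phi'}(0,w)-\mathrm{i}R_{\phi'}(1,w)\bigr)$, and the key input is that the expansion $\varphi'$ of the GOBS $\varphi$ corresponding to $f$ satisfies $R_{\varphi'}(u)=0$ for all odd $u\in\mathbb{Z}_{4m}$, which under the identification $\mathbb{Z}_{4m}\cong\mathbb{Z}_4\times\mathbb{Z}_m$ says precisely that $R_{\phi'}(1,w)=0$, killing the imaginary part. You instead argue intrinsically: $R_f(w)$ is a Gaussian integer, and its imaginary part is a signed count over the shifts $k$ at which exactly one of $f(k)$, $f(k+w)$ is imaginary; the parity of that count is $\sum_k\bigl(t(k)+t(k+w)\bigr)=2\sum_k t(k)\equiv 0\bmod 2$, so $\mathrm{Im}\,R_f(w)$ is even and $A^2+B^2=1$ forces $R_f(w)=\pm1$. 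Your argument is more elementary and self-contained (it needs none of Lemma~\ref{gobs_goba}, Remark~\ref{aqbs}, Lemma~\ref{lrphp}, or Theorem~\ref{OQSQOC}), and it actually proves the stronger statement that $\mathrm{Im}\,R_f(w)$ is even for \emph{every} quaternary sequence, of any length and any shift; the optimality hypothesis is used only at the very end. What the paper's route buys is consistency with the surrounding development: the vanishing of $R_{\phi'}(1,w)$ is exactly the fact reused in Corollary~\ref{relationcorrelation} and Theorem~\ref{caracteqoasdsreplace} to get the symmetry condition on the cross-correlations, so the paper extracts it once and cites it, whereas your parity argument, while cleaner here, does not directly hand you that later ingredient. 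Both of your variants of the parity step (the $t$-function count and the product $\prod_k f(k)\overline{f(k+w)}=\bigl|\prod_k f(k)\bigr|^2=1$ forcing $\sum_k a_k\equiv 0\bmod 4$) check out.
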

\begin{proof}
We appeal to Lemma~\ref{lrphp} once more.
The GOBS $\varphi$ 
corresponding to $f$ has $R_{\varphi'}(u)=0$ if 
$u\in \mathbb{Z}_{4m}$ is odd;
i.e., $R_{\phi'}(1,w)=0$ for all
$w\not \equiv 0 \mod m$. \hfill $\Box$
\end{proof}

We need the next result in
Section~\ref{SectionASDS}, to prove the 
equivalence between OQS and almost supplementary
difference sets.
Denote the periodic  cross-correlation 
$\sum_{k=0}^{n-1}  a(k)b(k+w)$ of binary 
sequences $a$ and $b$ of length $n$ 
by $R_{a,b}(w)$. 
\begin{corollary}\label{relationcorrelation}
A quaternary sequence $f$ of odd length $m$
is an OQS if and only if
\[
R_{\phi(1,-)}(w)=R_{\phi(0,-)}(w)=\pm 1
\quad \mbox{and} \quad
R_{\phi(1,-),\phi(0,-)}(w)=
R_{\phi(0,-),\phi(1,-)}(w)
\] 
for $1\leq w\leq m-1$, where $\phi$ is as
in Remark{\em ~\ref{aqbs}}.
\end{corollary}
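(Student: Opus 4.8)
The plan is to relate the periodic autocorrelations $R_{\phi'}(0,w)$ and $R_{\phi'}(1,w)$ appearing in Lemma~\ref{lrphp} to the auto- and cross-correlations of the two rows $\phi(0,-)$ and $\phi(1,-)$ of the binary $(2,m)$-array $\phi$. First I would compute $R_{\phi'}(a,w)$ directly from the definition of the expansion $\phi'$ of $\phi$ with respect to ${\bf z}=(1,0)$. Since $z_1=1$, the group $E=\mathbb Z_4\times\mathbb Z_m$, $H=\{0,2\}\times\{0\}$, and $K=\{0\}$, so $\phi'$ is a genuine $\mathbb Z_4\times\mathbb Z_m$-array with $\phi'(2,k)=-\phi'(0,k)$ and (by the sign rule on odd-weight elements) $\phi'(1,k)=\phi(1,k)$, $\phi'(3,k)=-\phi(1,k)$, while $\phi'(0,k)=\phi(0,k)$. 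Summing $\phi'(a,k)\overline{\phi'(a+2,w+k)}$ over the four residues $a\in\{0,1,2,3\}$ and all $k$, the even-index terms contribute $2R_{\phi(0,-)}(w)$ and the odd-index terms contribute $2R_{\phi(1,-)}(w)$, giving
\[
R_{\phi'}(0,w)=2\bigl(R_{\phi(0,-)}(w)+R_{\phi(1,-)}(w)\bigr),
\]
and a parallel bookkeeping of the shift-by-$1$ terms yields
\[
R_{\phi'}(1,w)=2\bigl(R_{\phi(1,-),\phi(0,-)}(w)-R_{\phi(0,-),\phi(1,-)}(w)\bigr)
\]
(up to the precise sign/index conventions, which I would pin down carefully). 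Substituting these two identities into Lemma~\ref{lrphp} gives
\[
R_f(w)=\tfrac12\bigl(R_{\phi(0,-)}(w)+R_{\phi(1,-)}(w)\bigr)
-\tfrac{\mathrm i}{2}\bigl(R_{\phi(1,-),\phi(0,-)}(w)-R_{\phi(0,-),\phi(1,-)}(w)\bigr).
\]

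With that formula in hand the equivalence is a matter of matching conditions. For the ``if'' direction, if $R_{\phi(0,-)}(w)=R_{\phi(1,-)}(w)=\pm 1$ and the two cross-correlations coincide, then $R_f(w)=\pm 1$, so $f$ is an OQS. For the ``only if'' direction I would invoke Theorem~\ref{OQSQOC} and Corollary~\ref{vOQS}: if $f$ is an OQS then $\phi$ is a GOBA$(2,m)$ of type $(1,0)$, so $R_{\phi'}(1,w)=0$ for $1\le w\le m-1$ (this is exactly what was used in the proof of Corollary~\ref{vOQS}), forcing $R_{\phi(1,-),\phi(0,-)}(w)=R_{\phi(0,-),\phi(1,-)}(w)$; and then $R_f(w)=\tfrac12(R_{\phi(0,-)}(w)+R_{\phi(1,-)}(w))=\pm 1$ by Corollary~\ref{vOQS}. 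It remains to see that each summand is itself $\pm 1$. Here I would use parity: $R_{\phi(0,-)}(w)$ and $R_{\phi(1,-)}(w)$ are both sums of $m$ terms $\pm 1$ with $m$ odd, hence both odd integers; their average is $\pm 1$, and two odd integers with average $\pm 1$ must be equal to that common value $\pm 1$ (the only way $\tfrac12(x+y)=\pm1$ with $x,y$ odd and $|x|,|y|\le m$ forces $x=y=\pm1$, since $x+y=\pm2$ and $x\equiv y\pmod2$ already, but one still has to rule out e.g. $x=3,y=-1$). That last point needs the extra input that each $R_{\phi(a,-)}(w)$ individually has absolute value $1$, which I would extract from the GOBA condition $R_{\phi'}(x)\in\{0,\pm2|H|\}=\{0,\pm4\}$ applied separately: indeed $R_{\phi'}(0,w)=2(R_{\phi(0,-)}(w)+R_{\phi(1,-)}(w))\in\{0,\pm4\}$ and, combined with $R_{\phi'}(1,w)=0$ meaning the cross terms agree, one re-derives that $R_{\phi(0,-)}(w)+R_{\phi(1,-)}(w)\in\{0,\pm2\}$; then the $\pm1$ conclusion for each summand follows once one also notes $R_{\phi(0,-)}(w)-R_{\phi(1,-)}(w)$ is governed by the vanishing of $R_{\phi'}(2,w)+R_{\phi'}(3,w)$ type relations.

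Actually a cleaner route for the ``only if'' direction avoids this delicacy: by Remark~\ref{aqbs} the two rows are $\phi(0,k)=\mathrm{Re}(f(k))-\mathrm{Im}(f(k))$ and $\phi(1,k)=\mathrm{Re}(f(k))+\mathrm{Im}(f(k))$, and since each $f(k)\in\{\pm1,\pm\mathrm i\}$ exactly one of $\mathrm{Re}(f(k)),\mathrm{Im}(f(k))$ is $0$, so $\phi(1,k)=\overline{f(k)}^{\,2}\phi(0,k)$-type relations hold; more usefully, $\phi(1,k)=\varepsilon(k)\phi(0,k)$ is false in general, but one does have $\phi(0,k)\phi(1,k)=\mathrm{Re}(f(k))^2-\mathrm{Im}(f(k))^2=\pm1$ always, and this lets one express $R_{\phi(1,-)}(w)$, $R_{\phi(0,-)}(w)$ and the two cross-correlations in terms of the real and imaginary parts of $f$, recovering $R_f(w)$'s real and imaginary parts as precisely the symmetric/antisymmetric combinations above. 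I would then read off directly: $f$ OQS $\iff$ $|R_f(w)|=1$ for all $w$ $\iff$ (using Corollary~\ref{vOQS} to know $R_f(w)$ is real and equal to $\pm1$) the imaginary part vanishes, i.e.\ the cross-correlations agree, and the real part is $\pm1$; the parity argument then upgrades ``$R_{\phi(0,-)}(w)+R_{\phi(1,-)}(w)=\pm2$'' to ``each is $\pm1$''. The main obstacle, and the only genuinely non-formal step, is exactly this last upgrade: showing the two row autocorrelations are not merely equal-on-average-to-$\pm1$ but individually $\pm1$. I expect this to follow from the second defining clause of GOBA (the count $|\{x:R_{\phi'}(x)=0\}|=|E|/2$) together with the relations $R_{\phi'}(2,w)=-R_{\phi'}(0,w)$, $R_{\phi'}(3,w)=-R_{\phi'}(1,w)$ from the proof of Theorem~\ref{OQSQOC}, which pin down which of $R_{\phi'}(0,w),R_{\phi'}(1,w)$ vanishes and force the nonzero one to be $\pm4$; everything else is the routine expansion bookkeeping sketched above.
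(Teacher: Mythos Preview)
Your overall strategy is the same as the paper's: both rest on the decomposition
\[
R_f(w)=\tfrac12\bigl(R_{\phi(1,-)}(w)+R_{\phi(0,-)}(w)\bigr)+\tfrac{\mathrm i}{2}\bigl(R_{\phi(1,-),\phi(0,-)}(w)-R_{\phi(0,-),\phi(1,-)}(w)\bigr),
\]
which the paper quotes from \cite{KS84} and you re-derive through Lemma~\ref{lrphp}. From here the paper simply invokes Corollary~\ref{vOQS} and declares the claim ``obvious''.

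The step you isolate as the obstacle---upgrading $R_{\phi(0,-)}(w)+R_{\phi(1,-)}(w)=\pm 2$ to the assertion that \emph{each} term equals $\pm 1$---is a genuine problem, and your instinct that parity alone cannot do it is correct. In fact that upgrade is false: for the paper's own OQS $f_1=(1,\mathrm i,1)$ of length~$3$ one has $\phi(1,-)=(1,1,1)$, so $R_{\phi(1,-)}(1)=3$; likewise for $f=(-1,1,\mathrm i,1,-\mathrm i,1,\mathrm i,1,-1)$ of length~$9$ one computes $R_{\phi(0,-)}(1)=-3$ and $R_{\phi(1,-)}(1)=1$. So the corollary as literally stated does not hold, and no amount of GOBA bookkeeping will rescue it.

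What the formula together with Corollary~\ref{vOQS} \emph{does} give is exactly what you wrote before attempting the upgrade: $f$ is an OQS iff $R_{\phi(0,-)}(w)+R_{\phi(1,-)}(w)=\pm 2$ and the two cross-correlations agree. That weaker statement is all that is used downstream: in the proof of Theorem~\ref{caracteqoasdsreplace} only the sum $d(w)=d_{\phi(0,-)}(w)+d_{\phi(1,-)}(w)$ appears, and it depends only on $R_{\phi(0,-)}(w)+R_{\phi(1,-)}(w)$. So your argument is complete and correct for the version of the corollary that is actually true and actually needed; the residual ``gap'' you worried about is a defect in the stated corollary, not in your proof.
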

\begin{proof}
By \cite[(6)]{KS84}, we have
\[ R_f(w)=\frac{1}{2}
(R_{\phi(1,-)}(w)+R_{\phi(0,-)}(w))+
 \frac{\mathrm{i}}{2}
(R_{\phi(1,-),\phi(0,-)}(w)-
R_{\phi(0,-),\phi(1,-)}(w)).
\]
The claim is then obvious from
Corollary~\ref{vOQS}. \hfill $\Box$
\end{proof}

\subsection{Quasi-orthogonal cocycles
over ${\mathbb Z}_2\times {\mathbb Z}_m$}
\label{CocyclesOverZ2m}
We now return the discussion
to quasi-orthogonal cocycles, with a focus
on indexing group 
$G=\mathbb Z_2 \times\mathbb Z_m$, $m$ 
odd.

Define $\lambda \in Z^2(G,\langle -1\rangle)$
by
\[
\lambda((a,u),(b,w))=
\left\{\begin{array}{rl}
-1 &\quad a=b=1\\
1 & \quad \mbox{otherwise}.
\end{array}\right.
\]
Order the elements of $G$ as
$g_1=(0,0), g_2 = (0,1),\ldots,
g_{m}=(0,m-1), g_{m+1}=(1,0),
\ldots, \allowbreak g_{2m}=(1,m-1)$.
For $1\leq i\leq m$,
and with 
rows and columns indexed by $G$
under this ordering, define the
coboundary matrices $M_{\partial_i}$ 
and $M_{\partial_{i+m}}$ to be 
the respective normalizations of
\begin{equation}\label{PartialPrecursor}
\left[
\begin{array}{cc}
 C_i \ \ & J \\
 J \ \ & C_i
 \end{array}\right]\quad \mbox{and}
 \quad 
 \left[\begin{array}{cc}
 J & \  C_{i} \\
 C_{i} & \ J
 \end{array}\right], 
\end{equation}
where $C_i$ is the $m\times m$ 
back circulant $\{\pm 1\}$-matrix whose 
first row is $1$s except in position
$i$, and  $J$ is the $m\times m$
all $1$s matrix. Then 
$\{\lambda, \partial_2,\ldots,
\partial_{2m-1}\}$ is a basis 
of $Z^2(G,\langle -1\rangle)$. 

\begin{proposition}[{\cite[Theorem~2]{AF18}}]
\label{Goba-quasiorthogonal}
A normalized binary $(2,m)$-array
$\phi$ is a GOBA$(2,m)$ of type 
$(1,0)$ if and only if
$\lambda\partial\phi$ 
is quasi-orthogonal.
\end{proposition}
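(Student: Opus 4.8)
The plan is to compute the row sums of $M_{\lambda\partial\phi}$ directly and identify them, up to sign, with the autocorrelation values $R_{\phi'}(x)$ of the expansion $\phi'$ of $\phi$ with respect to ${\bf z}=(1,0)$. Then ``$\phi$ is a GOBA$(2,m)$ of type $(1,0)$'' and ``$\lambda\partial\phi$ is quasi-orthogonal'' both become the assertion that these values are as small as possible.

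Since $K$ is trivial when ${\bf z}=(1,0)$, the expansion satisfies $\phi'(0,k)=\phi(0,k)$, $\phi'(1,k)=\phi(1,k)$, $\phi'(2,k)=-\phi(0,k)$ and $\phi'(3,k)=-\phi(1,k)$ for $k\in\mathbb Z_m$; in particular $R_{\phi'}(2,u)=-R_{\phi'}(0,u)$ and $R_{\phi'}(3,u)=-R_{\phi'}(1,u)$. With $G$ ordered as before the statement and $\phi$ normalized (so that $\lambda\partial\phi$ is a normalized cocycle and $M_{\lambda\partial\phi}=[\lambda\partial\phi(g,h)]_{g,h\in G}$), a short computation---using $\lambda((0,u),-)\equiv 1$ and $\lambda((1,u),(b,w))=(-1)^{b}$, and expanding each $R_{\phi'}$ by splitting the sum over $E$ according to the first coordinate in $\mathbb Z_4$---gives, for $a\in\{0,1\}$ and $u\in\mathbb Z_m$,
\[ \sum_{h\in G}(\lambda\partial\phi)((a,u),h)=\frac{1}{2}\,\phi(a,u)\,R_{\phi'}(a,u), \]
the relevant identities being $2\sum_{w}(\phi(0,w)\phi(0,u+w)+\phi(1,w)\phi(1,u+w))=R_{\phi'}(0,u)$ and $2\sum_{w}(\phi(0,w)\phi(1,u+w)-\phi(1,w)\phi(0,u+w))=R_{\phi'}(1,u)$. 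Deleting the row indexed by $1_G$, using the two sign relations above, and noting $R_{\phi'}(1,0)=0$,
\[ RE(M_{\lambda\partial\phi})=\frac{1}{2}\left(\sum_{u\neq 0}|R_{\phi'}(0,u)|+\sum_{u}|R_{\phi'}(1,u)|\right)=\frac{1}{4}\sum_{x\in E\setminus H}|R_{\phi'}(x)|. \]

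Next I would bound the last sum below. Each of $R_{\phi'}(0,u)$ and $R_{\phi'}(1,u)$ is $\equiv 0 \mod 4$ (from the displayed expressions, $m$ being odd), while $R_{\phi'}(0,u)+R_{\phi'}(1,u)\equiv 4 \mod 8$ (the congruence used in the proof of Theorem~\ref{OQSQOC}, also immediate from Remark~\ref{aqbs}). Hence $|R_{\phi'}(0,u)|+|R_{\phi'}(1,u)|\geq 4$ for every $u$, with equality precisely when one of the two is $0$ and the other $\pm 4$. Summing over $u\in\{1,\ldots,m-1\}$ gives $\sum_{x\in E\setminus H}|R_{\phi'}(x)|\geq 8(m-1)$, so $RE(M_{\lambda\partial\phi})\geq 2(m-1)=4t$ where $|G|=4t+2$; this reproves the bound of \cite[Proposition~1]{AF17} in the present case (which applies because $\lambda\notin B^2(G,\mathbb Z_2)$, hence $\lambda\partial\phi\notin B^2(G,\mathbb Z_2)$), so one may instead simply quote that proposition.

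Finally I would check that equality $RE(M_{\lambda\partial\phi})=2(m-1)$ is equivalent to the two conditions defining a GOBA$(2,m)$ of type $(1,0)$. If equality holds, then for each $u\in\{1,\ldots,m-1\}$ exactly one of $R_{\phi'}(0,u),R_{\phi'}(1,u)$ is $0$ and the other is $\pm 4$; together with the sign relations and $R_{\phi'}(1,0)=R_{\phi'}(3,0)=0$ this gives $R_{\phi'}(x)\in\{0,\pm 4\}=\{0,\pm 2|H|\}$ for all $x\in E\setminus H$, and it makes exactly two of the four numbers $R_{\phi'}(j,u)$, $j\in\mathbb Z_4$, vanish for each $u\in\mathbb Z_m$, whence $|\{x\in E:R_{\phi'}(x)=0\}|=2m=|E|/2$. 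Conversely, under the GOBA conditions all $2m$ zeros of $R_{\phi'}$ lie in $E\setminus H$ (as $R_{\phi'}(0,0)=-R_{\phi'}(2,0)=4m\neq 0$), leaving $2m-2$ values equal to $\pm 4$ there, so $\sum_{x\in E\setminus H}|R_{\phi'}(x)|=4(2m-2)=8(m-1)$ and $RE(M_{\lambda\partial\phi})=2(m-1)$; thus $\lambda\partial\phi$ is quasi-orthogonal. The main obstacle is the first step: obtaining the row-sum identity with the correct signs calls for care with the block form of the coboundary matrices in~(\ref{PartialPrecursor}), the twist by $\lambda$, and the precise dictionary between $\phi$ and its $(1,0)$-expansion $\phi'$; once it is in hand, the remainder is bookkeeping driven by $R_{\phi'}(0,u)+R_{\phi'}(1,u)\equiv 4 \mod 8$.
\hfill $\Box$
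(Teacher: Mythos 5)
The paper does not actually prove this proposition: it is imported verbatim as \cite[Theorem~2]{AF18}, so there is no in-paper argument to compare against. Your self-contained proof is correct and is the natural (and, as far as I can tell, the cited reference's) route: the row-sum/autocorrelation dictionary generalizing the classical ``perfect binary array $\Leftrightarrow$ orthogonal coboundary'' correspondence. The key identity $\sum_{h}(\lambda\partial\phi)((a,u),h)=\tfrac12\phi(a,u)R_{\phi'}(a,u)$ checks out: for $a=0$ the $\lambda$-factor is trivial and the $b\in\{0,2\}$, $b\in\{1,3\}$ contributions to $R_{\phi'}(0,u)$ each double the corresponding $\mathbb Z_m$-sums, while for $a=1$ the sign $(-1)^b$ coming from $\lambda$ matches exactly the signs introduced by the expansion $\phi'(2,k)=-\phi(0,k)$, $\phi'(3,k)=-\phi(1,k)$. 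The divisibility by $4$ of each of $R_{\phi'}(0,u)$, $R_{\phi'}(1,u)$ and the congruence $R_{\phi'}(0,u)+R_{\phi'}(1,u)\equiv 4\bmod 8$ (the same fact invoked in the proof of Theorem~\ref{OQSQOC}) give the lower bound $RE\geq 2(m-1)=4t$, and you correctly note that $\lambda\partial\phi\notin B^2(G,\mathbb Z_2)$ (forced by the basis statement in Section~\ref{CocyclesOverZ2m}), so that quasi-orthogonality means exactly $RE=4t$. The final bookkeeping---equality iff for each $u\neq 0$ exactly one of $R_{\phi'}(0,u),R_{\phi'}(1,u)$ vanishes and the other is $\pm4$, together with the count of $2(m-1)+2=2m=|E|/2$ zeros using $R_{\phi'}(1,0)=R_{\phi'}(3,0)=0$ and $R_{\phi'}(0,0)=-R_{\phi'}(2,0)=4m$---is also right. (A small bonus your argument makes visible: the mod-$8$ congruence shows the second GOBA condition is actually implied by the first in this setting.)
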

\begin{remark}
$\partial\phi=
\prod_{i=2}^{2m-1} 
\partial_i^{e_i}
=\partial_{2m}
\prod_{i=2}^{m} 
\partial_i^{e_i}
\cdot \, \prod_{i=m+1}^{2m-1} 
\partial_i^{1-e_i}$
where $e_i=\delta_{\phi(g_i),-1}$
(Kronecker delta).
\end{remark}
\begin{corollary}\label{exi-oqs-qoc}
There exists an OQS of length $m$
if and only if there exists 
a quasi-orthogonal cocycle over 
$\mathbb Z_2\times\mathbb Z_m$
that is not a coboundary.
\end{corollary}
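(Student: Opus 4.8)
The plan is to chain together the equivalences already assembled in this section. By Theorem~\ref{OQSQOC}, an OQS $f$ of length $m$ corresponds to a GOBA$(2,m)$ of type $(1,0)$, namely the binary array $\phi$ of Remark~\ref{aqbs}; and by Proposition~\ref{Goba-quasiorthogonal}, $\phi$ is such a GOBA precisely when $\lambda\partial\phi$ is quasi-orthogonal. So the composite map $f \mapsto \lambda\partial\phi$ sends OQS of length $m$ to quasi-orthogonal cocycles over $G = \mathbb Z_2 \times \mathbb Z_m$. The one thing not yet recorded is that the cocycles arising this way are exactly the quasi-orthogonal cocycles that lie outside $B^2(G,\langle -1\rangle)$, and conversely that every quasi-orthogonal non-coboundary is obtained from some OQS.

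First I would verify that $\lambda\partial\phi \notin B^2(G,\langle -1\rangle)$. Since $\lambda$ is one of the basis elements $\{\lambda,\partial_2,\ldots,\partial_{2m-1}\}$ of $Z^2(G,\langle -1\rangle)$ and is not a coboundary (equivalently, $\lambda$ is the unique non-coboundary basis element, the remaining $2m-2$ generators spanning $B^2$), the product $\lambda\partial\phi$ differs from the coboundary $\partial\phi$ by $\lambda$, hence is never a coboundary. Conversely, given any quasi-orthogonal $\psi \in Z^2(G,\langle -1\rangle) \setminus B^2(G,\langle -1\rangle)$, I would write $\psi$ in the basis: because $\psi \notin B^2$, its $\lambda$-coordinate must be $1$, so $\psi = \lambda\mu$ for some $\mu \in B^2(G,\langle -1\rangle)$, say $\mu = \partial\phi$ with $\phi$ a (normalized) binary $(2,m)$-array. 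Then $\lambda\partial\phi = \psi$ is quasi-orthogonal, so by Proposition~\ref{Goba-quasiorthogonal} $\phi$ is a GOBA$(2,m)$ of type $(1,0)$, and by Theorem~\ref{OQSQOC} the quaternary sequence $f$ attached to $\phi$ via Remark~\ref{aqbs} is an OQS of length $m$. This closes the loop in both directions.

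The remaining bookkeeping is to confirm the correspondences are compatible: the map $\phi \leftrightarrow f$ of Remark~\ref{aqbs} is a genuine bijection between normalized binary $(2,m)$-arrays and quaternary sequences on $\mathbb Z_m$ (normalization of $\phi$ matches the convention $f(0) \in \{1\}$ after the obvious global scaling, or is harmless since cocycle statements are up to normalization), and $\phi \mapsto \partial\phi \mapsto \lambda\partial\phi$ is injective on such arrays because distinct normalized arrays have distinct coboundaries. Hence the two-step assignment $f \mapsto \lambda\partial\phi$ is a bijection from OQS of length $m$ onto quasi-orthogonal cocycles over $\mathbb Z_2 \times \mathbb Z_m$ that are not coboundaries, which in particular gives the asserted existence equivalence.

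The only genuinely delicate point is the claim that a quasi-orthogonal cocycle outside $B^2(G,\langle -1\rangle)$ must have $\lambda$-coordinate equal to $1$ in the given basis --- i.e., that $\lambda$ really is (up to coboundary) the sole obstruction to being a coboundary. This rests on the structure of $Z^2(G,\langle -1\rangle)/B^2(G,\langle -1\rangle) \cong H^2(G,\langle -1\rangle)$ for $G = \mathbb Z_2 \times \mathbb Z_m$ with $m$ odd, which has order $2$, generated by the class of $\lambda$; I would cite the basis statement made just before Proposition~\ref{Goba-quasiorthogonal} (that $\{\lambda,\partial_2,\ldots,\partial_{2m-1}\}$ is a basis of $Z^2(G,\langle -1\rangle)$, with all but $\lambda$ being coboundaries) to dispatch it. Everything else is a direct concatenation of Theorem~\ref{OQSQOC} and Propositions already available.
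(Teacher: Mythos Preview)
Your proof is correct and follows exactly the route the paper takes: the paper's own proof is the single line ``Immediate from Theorem~\ref{OQSQOC} and Proposition~\ref{Goba-quasiorthogonal}'', and you have simply unpacked that chaining, together with the (genuinely needed but tacit) observation that $H^2(\mathbb Z_2\times\mathbb Z_m,\langle -1\rangle)$ has order~$2$, so that the non-coboundaries are precisely the cocycles of the form $\lambda\partial\phi$. One small overclaim worth noting: the assignment $f\mapsto\lambda\partial\phi$ is not actually a bijection, since $\partial\phi=\partial(\chi\phi)$ for the nontrivial homomorphism $\chi(a,k)=(-1)^a$, making the map two-to-one on normalized arrays---but this is irrelevant to the existence equivalence, which is all the corollary asserts.
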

\begin{proof}
Immediate from Theorem~\ref{OQSQOC}
and Proposition~\ref{Goba-quasiorthogonal}.
\hfill $\Box$
\end{proof}

\begin{remark}
Corollary~\ref{exi-oqs-qoc} and
Table~\ref{Ex-quater-seq} 
provide new infinite families of 
quasi-orthogonal  cocycles.
\end{remark}

\begin{example}
$\phi_1={\small 
\left[\begin{array}{rrr}
1 & -1 & \phantom{-} 1 \\
1 &  1 & \phantom{-} 1
\end{array}
\right]}$  
is a GOBA($2,3$), and
$\phi_2 ={\small 
\left[\begin{array}{rrrrr}
1 & -1 & \phantom{-} 1 & 
\phantom{-} 1 & \phantom{-} 1 \\
1 &  -1 & \phantom{-} 1 & 
\phantom{-} 1 & \phantom{-} 1
\end{array}
\right]}$  
is a GOBA($2,5$), both of 
type $(1,0)$.
The corresponding OQS are 
$f_1=(1, {\rm i}, 1)$ and 
$f_2=(1,-1,1,1,1)$,
with $R_{f_1} = (3,1,1)$ and 
$R_{f_2} =(5,1,1,1,1)$;  
their GOBS are 
$\varphi_1=(1,1,-1,-1,-1,1)$ and 
$\varphi_2=(1,-1,-1,-1,1,1,1,-1,1,1)$. 
The quasi-orthogonal 
cocycles $\lambda\partial \phi_i$ 
have matrices
\[
M_\lambda \circ M_{\partial_2} 
= {\footnotesize 
\left[
\renewcommand{\arraycolsep}{.05cm}
\begin{array}{rrrrrr}
1 & 1 & 1 & 1 & 1 & 1  \\
1 & 1 & -1 & -1 & -1 & -1  \\
1 & -1 & -1 & 1 & 1 & 1  \\
1 & -1 & 1 & -1 & 1 & -1  \\
1 & -1 & 1 & 1 & -1 & -1  \\
1 & -1 & 1 & -1 & -1 & 1  
\end{array}
\vspace{1pt} \right] }, 
\]  
\[
 M_\lambda\circ M_{\partial_2}\circ
 M_{\partial_7}=
 {\footnotesize \left[ 
\renewcommand{\arraycolsep}{.05cm}
\begin{array}{rrrrrrrrrr}
 1 & 1 & 1 & 1 & 1 & 1 & 1 & 1 & 1 & 1 \\
 1 & 1 & -1 & -1 & -1 & 1 & 1 & -1 & -1 & -1 \\
 1 & -1 & 1 & 1 & -1 & 1 & -1 & 1 & 1 & -1 \\
 1 & -1 & 1 & -1 & 1 & 1 & -1 & 1 & -1 & 1 \\
 1 & -1 & -1 & 1 & 1 & 1 & -1 & -1 & 1 & 1 \\
 1 & 1 & 1 & 1 & 1 & -1 & -1 & -1 & -1 & -1 \\
 1 & 1 & -1 & -1 & -1 & -1 & -1 & 1 & 1 & 1 \\
 1 & -1 & 1 & 1 & -1 & -1 & 1 & -1 & -1 & 1 \\
 1 & -1 & 1 & -1 & 1 & -1 & 1 & -1 & 1 & -1 \\
 1 & -1 & -1 & 1 & 1 & -1 & 1 & 1 & -1 & -1 
 \end{array}
 \vspace{1pt} \right] }
\]
where $\circ$ denotes Hadamard (componentwise)
product.
\end{example}

\section{Almost supplementary difference sets}
\label{SectionASDS}

Let $B=\{x_1,\ldots,x_{k_1}\}$ and 
$D=\{y_1,\ldots,y_{k_2}\}$ be
subsets of $\mathbb Z_m$. Suppose  
that the congruences
\[
x_i-x_j\equiv a\mod m,\qquad 
y_{i'}-y_{j'}\equiv a \mod m
\]
have exactly $\mu$ solutions for 
$t$ values $a\not \equiv 0 \mod m$, 
and exactly $\mu + 1$ solutions for 
the remaining $m-1-t$ values  
$a\not \equiv 0 \mod m$.
Then we call $B$ and $D$ 
{\em almost supplementary difference sets}
(ASDS); in more detail, $B$ and $D$ are 
$2$-$\{m;k_1,k_2;\mu;t\}$ ASDS. 
Clearly
\begin{equation}\label{ConstrainParameters}
k_1(k_1-1)+k_2(k_2-1)=t\mu+(m-1-t)(\mu+1),
\end{equation}
so that
$t=(m-1)(\mu+1)-k_1(k_1-1)-k_2(k_2-1)$. 
We may therefore drop  
`$t$' in the specification of the 
parameters of ASDS.
\begin{example}\label{exampleASDS1}
$B=\{1,4,5,6,7\}$ and $D=\{0,2\}$ 
are $2$-$\{9;5,2;2;2\}$ ASDS.
\end{example}
\begin{remark}
If $t=m-1$ then $B$, $D$  are 
$2$-$\{m;k_1,k_2;\mu\}$ 
{\it supplementary difference 
sets} (SDS)~\cite{KKS91}.
Another extreme is $k_1\leq 1<k_2$; 
then $D$ is an 
\textit{almost difference 
set}~\cite{ADHKM01}. 
\end{remark}

Sometimes we can obtain
ASDS from SDS by enlarging
or reducing one of the supplementary 
sets.  
This places further constraints 
on the parameters, according to 
(\ref{ConstrainParameters}).
\begin{lemma}
Suppose that $B$, $D$ are  
$2$-$\{m;k_1,k_2;\mu\}$ SDS.
\begin{itemize}
\item[{\rm (i)}]
If $B\setminus\{b\}$ for some
$b\in B$ and $D$ are 
$2$-$\{m;\allowbreak k_1-1,k_2;\mu-1; 
\frac{m-1}{2}\}$ \abk ASDS then
$k_1=(m+3)/4$ and
$\mu=(m+3)/16+(k_2^2-k_2)/(m-1)$.

\vspace{2pt}

\item[{\rm (ii)}] 
If $B\cup\{b\}$ for some
$b\in \mathbb{Z}_m\setminus (B\cup D)$
and $D$ are 
$2$-$\{m;k_1+1,k_2;\mu; \frac{m-1}{2}\}$ 
\abk ASDS then
$k_1=(m-1)/4$
and
$\mu=(m-5)/16+ (k_2^2-k_2)/(m-1)$.
\end{itemize}
\end{lemma}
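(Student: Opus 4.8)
The plan is to measure exactly how the difference function of $B$ changes when a single element is deleted from or adjoined to it, and then to count in two ways the shifts at which the ASDS count deviates from the (constant) SDS count.

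For a subset $S\subseteq \mathbb Z_m$ and $a\not\equiv 0 \mod m$, write $d_S(a)$ for the number of ordered pairs $(x,y)\in S\times S$ with $x-y\equiv a \mod m$. Since $B,D$ are $2$-$\{m;k_1,k_2;\mu\}$ SDS, the value $t=m-1$ in the definition forces $d_B(a)+d_D(a)=\mu$ for \emph{every} $a\not\equiv 0$, and (\ref{ConstrainParameters}) becomes $k_1(k_1-1)+k_2(k_2-1)=(m-1)\mu$. \textbf{Step 1 (local change).} In case~(i) put $B'=B\setminus\{b\}$. The pairs in $B\times B$ but not in $B'\times B'$ are those through $b$, so for $a\not\equiv 0$ one gets
\[
d_B(a)-d_{B'}(a)=[\,b-a\in B\,]+[\,b+a\in B\,],
\]
where $[\,\cdot\,]$ is the Iverson bracket; the two terms count the pairs $(b,b-a)$ and $(b+a,b)$, and they involve distinct residues because $m$ is odd (so $b-a\ne b+a$ when $a\ne 0$). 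In case~(ii), with $B''=B\cup\{b\}$ and $b\notin B$, the same argument gives $d_{B''}(a)-d_B(a)=[\,b-a\in B\,]+[\,b+a\in B\,]$.

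\textbf{Step 2 (the defect is $0$ or $1$).} Subtracting the SDS identity $d_B(a)+d_D(a)=\mu$ from the ASDS condition on $(B',D)$ shows that the left-hand side above equals $\mu-\big(d_{B'}(a)+d_D(a)\big)$, which by the ASDS parameters $2$-$\{m;k_1-1,k_2;\mu-1;\frac{m-1}{2}\}$ lies in $\{0,1\}$ and equals $1$ for exactly $\frac{m-1}{2}$ shifts $a\not\equiv 0$. Likewise in case~(ii) the left-hand side equals $\big(d_{B''}(a)+d_D(a)\big)-\mu\in\{0,1\}$, equal to $1$ for exactly $\frac{m-1}{2}$ shifts. \textbf{Step 3 (double counting).} Sum the identity of Step~1 over all $a\not\equiv 0$. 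As $a$ runs over the nonzero residues, $b-a$ (resp.\ $b+a$) runs over $\mathbb Z_m\setminus\{b\}$, so $\sum_{a\ne 0}[\,b\mp a\in B\,]=|B\setminus\{b\}|$, which is $k_1-1$ in case~(i) (since $b\in B$) and $k_1$ in case~(ii) (since $b\notin B$). Hence the total is $2(k_1-1)$ in~(i) and $2k_1$ in~(ii). By Step~2 this same total equals $\frac{m-1}{2}$. Therefore $2(k_1-1)=\frac{m-1}{2}$, i.e.\ $k_1=(m+3)/4$, in case~(i), and $2k_1=\frac{m-1}{2}$, i.e.\ $k_1=(m-1)/4$, in case~(ii).

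\textbf{Step 4 (solve for $\mu$).} Substitute the value of $k_1$ into $\mu=\dfrac{k_1(k_1-1)+k_2(k_2-1)}{m-1}$ coming from (\ref{ConstrainParameters}). In case~(i), $k_1-1=(m-1)/4$, so $k_1(k_1-1)=\frac{(m+3)(m-1)}{16}$ and $\mu=\frac{m+3}{16}+\frac{k_2^2-k_2}{m-1}$. In case~(ii), $k_1-1=(m-5)/4$, so $k_1(k_1-1)=\frac{(m-1)(m-5)}{16}$ and $\mu=\frac{m-5}{16}+\frac{k_2^2-k_2}{m-1}$. The one point that needs care is Step~2: a priori $[\,b-a\in B\,]+[\,b+a\in B\,]\in\{0,1,2\}$, and it is precisely the comparison with the SDS that forces it to be at most $1$ (equivalently, $b-a$ and $b+a$ are never both in $B$), which is what makes the count in Step~3 collapse to the clean value $\frac{m-1}{2}$; the rest is bookkeeping.
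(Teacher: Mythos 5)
Your proof is correct, and it is essentially the argument the paper intends (the paper leaves the lemma as an application of the counting identity (\ref{ConstrainParameters}), applied once with $t=m-1$ for the SDS and once with $t=\frac{m-1}{2}$ for the ASDS, then subtracted). Your local-change identity and the double count in Step~3 are just an expanded form of that subtraction --- $2(k_1-1)=k_1(k_1-1)-(k_1-1)(k_1-2)$ and $2k_1=(k_1+1)k_1-k_1(k_1-1)$ --- after which Step~4 coincides with the paper's use of (\ref{ConstrainParameters}) to solve for $\mu$.
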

\begin{example}
\begin{itemize}
    \item[(i)]
$B=\{1,2,4,8,11,16\}$,
$D=\{0,5,9,10,13,15,17,18,19,20\}$ 
are $2$-$\{21;6,\allowbreak 10;6\}$ SDS.
Also $B\setminus \{1\}$, $D$
are $2$-$\{21;5,10;5;10\}$ ASDS.

\vspace{2pt}

\item[(ii)]
$\{7,8\}$, $\{3,6,8\}$ are 
$2$-$\{9;2,3;1\}$ SDS, and
$\{4,7,8\}$, $\{3,6,8\}$ are 
$2$-$\{9;3,3;1;4\}$ ASDS.
\end{itemize}
\end{example}

Let $S$ be a subset of $\mathbb Z_m$,
with characteristic function 
$\chi_S : \mathbb Z_m\rightarrow \{0,1\}$.
Then $S^c$ will denote the (circulant)
matrix indexed by $\mathbb Z_m$ whose
$(i,j)$th entry is $1-2\chi_S(j-i)$.
We now present a 
formulation of ASDS using incidence
matrices of the supplementary sets;
this may be compared with
the Appendix of \cite{CK85}.
\begin{theorem}
\label{MatrixFormOfASDS}
\begin{enumerate}
\item[{\rm (i)}] 
Suppose that $B$ and $D$
are $2$-$\{m;k,r;\mu\}$ ASDS. 
Let $A$ be the set of all 
$a\in \mathbb{Z}_m\setminus \{ 0\}$
such that there are exactly $\mu$ 
 solutions of
\[
b-b'
\equiv a \ \, \mathrm{mod} \  \, m,
\qquad 
d-d' 
\equiv a \ \, \mathrm{mod} \ \,  m
\]
for $b, b' \in B$ and $d, d'\in D$.
Then
$[B^c(B^c)^\top+ D^c(D^c)^\top]_{i,j}$ 
is equal to
\[
[4(k+r-\mu)I_m+2(m-2(k+r-\mu))J_m]_{i,j} 
\] 
if $j-i \in A$, and 
\[
[4(k+r-\mu-1)I_m+2(m-2(k+r-\mu-1))J_m]_{i,j} 
\]
otherwise.

\vspace{2pt}

\item[{\rm (ii)}] 
Let $B^c$ and $D^c$ be $m\times m$ circulant 
$\{ \pm 1\}$-matrices such that 
$B^c(B^c)^\top+ D^c(D^c)^\top$
is as described in {\rm (i)}.
Then the subsets $B$, $D$ of
$\mathbb{Z}_m$ determined by the first 
rows of $B^c$ and $D^c$ are
$2$-$\{m;k,r;\mu \}$ ASDS,
where $k$ (resp., 
$r$) is the number of $-1$s in each row 
of $B^c$ (resp., $D^c$). 
\end{enumerate}
\end{theorem}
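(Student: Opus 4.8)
The plan is to reduce everything to a single computation with circulant matrices, exploiting the fact that for a subset $S\subseteq\mathbb Z_m$ the matrix $S^c$ is the circulant obtained by replacing the $0/1$ entries of the usual incidence circulant by $\pm 1$ via $x\mapsto 1-2x$. First I would record the translation dictionary. If $B$ has $k$ elements, then writing $P_B$ for the $0/1$ circulant with first row $\chi_B$, we have $B^c=J_m-2P_B$, and $P_BP_B^\top$ is the circulant whose $(i,j)$ entry counts the number of solutions of $b-b'\equiv j-i$ with $b,b'\in B$; in particular its diagonal is $k$ and its row sums are $k^2$. A direct expansion gives
\[
B^c(B^c)^\top=(J_m-2P_B)(J_m-2P_B)^\top
 = mJ_m-2kJ_m-2kJ_m+4P_BP_B^\top = 4P_BP_B^\top+(m-4k)J_m,
\]
using $P_BJ_m=J_mP_B^\top=kJ_m$ and $J_mJ_m^\top=mJ_m$; likewise for $D$ with $k$ replaced by $r$. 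Adding, and writing $N:=P_BP_B^\top+P_DP_D^\top$ (the ``combined difference-count'' circulant), we get $B^c(B^c)^\top+D^c(D^c)^\top = 4N+(2m-4(k+r))J_m$.

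Next I would unpack the ASDS hypothesis in terms of $N$. By definition, the off-diagonal $(i,j)$ entry of $N$ equals $\mu$ when $j-i\in A$ and $\mu+1$ otherwise, while the diagonal entry is $k+r$ (the sum of the diagonals of $P_BP_B^\top$ and $P_DP_D^\top$). Thus
\[
N = (\mu+1)(J_m-I_m) - [\text{indicator circulant of }A] + (k+r)I_m,
\]
but it is cleaner to split cases directly: for $j-i\in A$,
\[
N_{i,j}=\mu,\qquad [4N+(2m-4(k+r))J_m]_{i,j}=4\mu+2m-4(k+r)=2m-4(k+r-\mu),
\]
which is exactly the off-diagonal value of $4(k+r-\mu)I_m+(2m-4(k+r-\mu))J_m$ (the diagonal value there being $4(k+r-\mu)+2m-4(k+r-\mu)=2m$, matching $4N+\cdots$ on the diagonal since $N_{i,i}=k+r$ gives $4(k+r)+2m-4(k+r)=2m$). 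The case $j-i\notin A$, $j\neq i$ is identical with $\mu$ replaced by $\mu+1$, yielding the second displayed matrix of part~(i). This proves (i); note I should double-check that $k+r-\mu$ and $k+r-\mu-1$ are the quantities the paper writes, and that the diagonal is consistent in both alternatives (it is, both equal $2m$, which is why only the off-diagonal entries distinguish $A$ from its complement).

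For part (ii) I would essentially run the argument backwards. Given circulant $\pm1$-matrices $B^c,D^c$ with the stated Gram-type identity, define $P_B:=\tfrac12(J_m-B^c)$ and $P_D:=\tfrac12(J_m-D^c)$; these are $0/1$ circulants because the entries of $B^c,D^c$ are $\pm1$, and their first rows define subsets $B,D\subseteq\mathbb Z_m$ with $|B|=k$, $|D|=r$ equal to the number of $-1$s per row. Reversing the expansion above shows $4(P_BP_B^\top+P_DP_D^\top)+(2m-4(k+r))J_m$ equals the prescribed matrix, hence $P_BP_B^\top+P_DP_D^\top$ has diagonal $k+r$ and off-diagonal entries taking exactly the two values $\mu$ and $\mu+1$ (solving $4\mu+2m-4(k+r)=2m-4(k+r-\mu)$ etc.); reading off the $(i,j)$ entry as the number of representations of $j-i$ as a difference from $B$ plus the number from $D$ gives the ASDS property, with $t=|A|$. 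The only genuinely delicate point, and the one I would be most careful about, is bookkeeping the constants: making sure the ``$-\mu$'' versus ``$-\mu-1$'' shift lands on the right side of the dichotomy ($A$ vs.\ its complement), that $k$ and $r$ really are the numbers of $-1$s and not of $1$s, and that the diagonal contribution $2m$ is consistent so that it genuinely drops out of the $A$/non-$A$ distinction. Everything else is the routine circulant algebra sketched above.
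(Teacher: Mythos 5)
Your proposal is correct; all the constants check out (both candidate matrices have diagonal $2m$, and the off-diagonal values $2m-4(k+r-\mu)$ and $2m-4(k+r-\mu-1)$ match $4N+(2m-4(k+r))J_m$ on $A$ and its complement respectively). The route is genuinely different in presentation from the paper's, though the underlying fact is the same. You decompose $B^c=J_m-2P_B$ and do circulant algebra, identifying $P_BP_B^\top+P_DP_D^\top$ entrywise with the combined difference-count function; the paper instead works with the concatenated matrix $[B^c\,|\,D^c]$ and, for a pair of rows at offset $a$, counts how many columns equal $[-1,-1]^\top$ (namely $\bar\mu$, the number of solutions) versus $[-1,1]^\top$ and $[1,-1]^\top$ (each $k+r-\bar\mu$ times), concluding directly that the inner product of the two rows is $2m-4(k+r-\bar\mu)$. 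The paper's count is shorter and entirely self-contained, and it makes the converse (ii) a one-line reversal: the inner product determines $s$, the number of $[-1,-1]^\top$ columns, which is the difference count. Your algebraic version requires introducing the $0/1$ incidence circulants and verifying the identities $P_BJ_m=J_mP_B^\top=kJ_m$, but it buys a cleaner global statement ($B^c(B^c)^\top+D^c(D^c)^\top=4N+(2m-4(k+r))J_m$), makes the diagonal consistency transparent, and generalizes immediately to more than two supplementary sets. Both arguments are complete and correct, including the bookkeeping points you flag at the end.
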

\begin{proof}
 (i) \,
Choose any two different rows $i$ and 
$i+a$ modulo $m$ in the concatenated
matrix $[B^c \, |\, D^c]$.
Put $\bar{\mu}=\mu$ if $a\in A$
and $\bar{\mu}= \mu+1$ if $a\notin A$.
From the definition of ASDS, we
deduce that in these two rows the column 
$[-1,-1]^\top$
appears $\bar{\mu}$ times, and
$[-1,1]^\top$, $[1,-1]^\top$ 
appear $k+r-\bar{\mu}$ times each.
Hence the inner product of the rows 
is $2m-4(k+r-\bar{\mu})$.

(ii) \, 
Since each row of $B^c$ has 
$k$ $\, -1$s and each row of $D^c$ has 
$r$ $\, -1$s,  the inner product of 
rows $i$ and $j$ of $[B^c\, |\, D^c]$
is $2m-4(k+r)+4s$ where $s$ is 
the number of columns $[-1,-1]^\top$. 
Thus, with $a \equiv j-i \mod m$,
we have $s = \bar{\mu}$ as in part (i).
\hfill $\Box$
\end{proof}

We set down a few auxiliary 
facts to prepare for  
Theorem~\ref{caracteqoasdsreplace} below.
\begin{lemma}[{\cite[Lemma~3.1]{Jed92}}]
\label{corredif}
For any array $\varphi: A\rightarrow
\{\pm 1\}$,
\[
R_\varphi(x)= |A|+ 4(d_{\varphi}(x)
-|N_{\varphi}|)
\]
where $N_{\varphi}=\{a\in A \,|\,  
\varphi(a)=-1\}$ and 
$d_{\varphi}(x)=
|N_{\varphi}\cap (x+N_{\varphi})|$.
\end{lemma}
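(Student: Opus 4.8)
The plan is to evaluate $R_\varphi(x)$ directly from the definition by a counting argument. Since $\varphi$ takes values in $\{\pm 1\}$, complex conjugation acts trivially, so $R_\varphi(x)=\sum_{b\in A}\varphi(b)\varphi(x+b)$. First I would substitute $\varphi(a)=1-2\chi_{N_\varphi}(a)$ (valid because $\varphi(a)=-1$ exactly when $a\in N_\varphi$) and expand the product:
\[
R_\varphi(x)=\sum_{b\in A}\bigl(1-2\chi_{N_\varphi}(b)-2\chi_{N_\varphi}(x+b)+4\chi_{N_\varphi}(b)\chi_{N_\varphi}(x+b)\bigr).
\]

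Next I would evaluate the four resulting sums termwise. The first is $|A|$. The second is $2\sum_{b\in A}\chi_{N_\varphi}(b)=2|N_\varphi|$. For the third, since $A$ is a finite abelian group the translation $b\mapsto x+b$ permutes $A$, so $\sum_{b\in A}\chi_{N_\varphi}(x+b)=\sum_{c\in A}\chi_{N_\varphi}(c)=|N_\varphi|$. The fourth sum counts the $b\in A$ with $b\in N_\varphi$ and $x+b\in N_\varphi$, i.e.\ $b\in N_\varphi\cap(N_\varphi-x)$; translating by $x$ once more gives $|N_\varphi\cap(N_\varphi-x)|=|(x+N_\varphi)\cap N_\varphi|=d_\varphi(x)$. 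Collecting terms yields $R_\varphi(x)=|A|-4|N_\varphi|+4d_\varphi(x)$, which is the asserted formula.

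An equivalent route, if one prefers to avoid characteristic functions, is to partition $A$ into the four classes determined by the signs of $\varphi(b)$ and $\varphi(x+b)$, with cardinalities $n_{++},n_{+-},n_{-+},n_{--}$. Then $R_\varphi(x)=n_{++}-n_{+-}-n_{-+}+n_{--}$, while $n_{++}+n_{+-}+n_{-+}+n_{--}=|A|$, $\ n_{-+}+n_{--}=|N_\varphi|=n_{+-}+n_{--}$ (the second equality again using that $b\mapsto x+b$ is a bijection of $A$), and $n_{--}=d_\varphi(x)$; solving this small linear system gives the same formula.

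I do not expect any real obstacle here: the only point that needs care is the use of the group structure of $A$ to reindex the sum over $b$ by $c=x+b$, which underlies both the evaluation $\sum_{b}\chi_{N_\varphi}(x+b)=|N_\varphi|$ and the identity $|N_\varphi\cap(N_\varphi-x)|=d_\varphi(x)$. Everything else is bookkeeping, and the result is recorded as \cite[Lemma~3.1]{Jed92}.
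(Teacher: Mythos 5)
Your proof is correct. The paper itself gives no proof of this lemma (it is cited from Jedwab's paper), and your expansion of $\varphi=1-2\chi_{N_\varphi}$ followed by reindexing the sum over $b\mapsto x+b$ is exactly the standard argument behind the cited result; the four-class partition you sketch as an alternative is just the same computation in different notation.
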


\begin{proposition}\label{ASDSComplements}
Let $B$, $D$ be $2$-$\{m;k,r;\mu\}$ ASDS. 
Denote the complement of 
$X\subseteq \mathbb{Z}_m$ by $\overline{X}$. 
Then {\em (i)}~$\overline{B}$, $D$, 
{\em (ii)}~$B$, $\overline{D}$, 
and {\em (iii)}~$\overline{B}$, $\overline{D}$ 
are also ASDS, with parameters 
$\{m; m-k,r;m-2k+\mu\}$ in case {\em (i)}, 
$\{m;k,m-r;m-2r+\mu\}$ in case {\em (ii)}, and 
$\{m;m-k,m-r;2m-2k-2r+\mu\}$ in case {\em (iii)}. 
\end{proposition}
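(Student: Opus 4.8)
The plan is to isolate one elementary counting identity and derive all three claims from it. For $S\subseteq\mathbb Z_m$ with $|S|=k$ and $a\in\mathbb Z_m$, write $\delta_S(a)$ for the number of ordered pairs $(s,s')\in S\times S$ with $s-s'\equiv a\mod m$. Since $\mathbb Z_m=(S-a)\cup(\overline S-a)$ is a disjoint union with $|S-a|=k$, we get $|\overline S\cap(S-a)|=k-\delta_S(a)$, and hence
\[
\delta_{\overline S}(a)=|\overline S|-|\overline S\cap(S-a)|=(m-k)-(k-\delta_S(a))=\delta_S(a)+m-2k
\]
for every $a$. The point to exploit is that this shift is the \emph{same} constant $m-2k$ for all $a$.

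For part~(i) I would argue as follows. That $B,D$ are $2$-$\{m;k,r;\mu\}$ ASDS means $\delta_B(a)+\delta_D(a)$ equals $\mu$ for the $t$ values of $a\not\equiv 0$ lying in some set $A$, and $\mu+1$ for the remaining $m-1-t$ values. Adding the identity above, $\delta_{\overline B}(a)+\delta_D(a)=(\delta_B(a)+\delta_D(a))+(m-2k)$ equals $\mu+m-2k$ on exactly the same set $A$ and $\mu+1+m-2k$ elsewhere; since $|\overline B|=m-k$ and $|D|=r$, this is precisely the assertion that $\overline B,D$ are $2$-$\{m;m-k,r;m-2k+\mu\}$ ASDS (the associated value of $t$ is forced to be the same, consistent with~(\ref{ConstrainParameters})). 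Equivalently, in the matrix language of Theorem~\ref{MatrixFormOfASDS}: from the definition of $X^c$ one reads off $\overline X^c=-X^c$, so $\overline B^c(\overline B^c)^\top+D^c(D^c)^\top=B^c(B^c)^\top+D^c(D^c)^\top$ still has the shape required in Theorem~\ref{MatrixFormOfASDS}(ii); applying that part to the circulant $\{\pm1\}$-matrices $-B^c$ and $D^c$, which have $m-k$ and $r$ entries $-1$ per row respectively, returns the third ASDS parameter as $(m-k)+r-(k+r-\mu)=m-2k+\mu$.

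Parts~(ii) and~(iii) then need nothing new. Part~(ii) is part~(i) with the roles of $B$ and $D$ interchanged, producing the shift $m-2r$; and part~(iii) follows by applying the $B$-complementation and then the $D$-complementation (equivalently, running the displayed identity for both sets simultaneously), so the shifts add and the parameters become $\{m;m-k,m-r;(m-2k)+(m-2r)+\mu\}=\{m;m-k,m-r;2m-2k-2r+\mu\}$. I do not expect any genuine obstacle here; the only step demanding care will be the parameter bookkeeping — in the matrix route in particular, keeping in mind that the Gram matrix pins down only the combination $k+r-\mu$, so that $\mu$ itself must be recovered from the sizes of the (complemented) sets.
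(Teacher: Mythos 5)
Your argument is correct and is essentially the paper's own proof: the paper likewise reduces all three cases to the single identity $d_{\overline{X}}(w)=m-2|X|+d_{X}(w)$ (your $\delta_{\overline S}(a)=\delta_S(a)+m-2k$), combined with the observation that $B$, $D$ are ASDS exactly when $d_B(w)+d_D(w)\in\{\mu,\mu+1\}$ for $w\neq 0$. You additionally derive that identity and offer a matrix reformulation via Theorem~\ref{MatrixFormOfASDS}, but the underlying route is the same.
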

\begin{proof}
Write $d_{X}$ for
$d_{{\chi_{\scriptscriptstyle X}}}$.
Subsets $B$ and $D$ of $\mathbb{Z}_m$ 
are $2$-$\{m;|B|,|D|; \mu\}$ ASDS if and only
if $d_B(w)+d_D(w)=\mu$ or $\mu+1$ for all $w$,  
$1\leq w \leq m-1$.
Then the result follows from the
identity $d_{\overline{X}}(w)=m-2|X|+d_{X}(w)$.
\hfill $\Box$
\end{proof}

For the $2$-$\{m;k,r;\mu\}$ ASDS of most
interest to us, $\mu$ is determined by 
$m$, $k$, and $r$.
\begin{theorem}\label{caracteqoasdsreplace}
Let $f$ be a quaternary sequence of 
odd length $m$, with corresponding
$(2,m)$-array $\phi$ as
in Remark{\em ~\ref{aqbs}}. Then $f$ is an OQS 
if and only if 
\[
B=\{j\in \mathbb{Z}_m \; |\; \phi(0,j)=-1\},
\quad 
D=\{j\in \mathbb{Z}_m\; |\; \phi(1,j)=-1\}
\]
are $2$-$\{m;|B|,|D|;
|B|+|D|-\frac{m+1}{2}\}$ 
ASDS such that the multiset $B-D$ of 
differences $x-y$ modulo $m$ 
as $(x,y)$ ranges over $B\times D$
is symmetric, i.e., 
closed under negation.
\end{theorem}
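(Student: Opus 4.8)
The plan is to compute $R_f(w)$ in closed form in terms of $B$ and $D$ and then read off the two defining conditions on the right-hand side of the statement. Write $a=\phi(0,-)$ and $b=\phi(1,-)$ for the two rows of $\phi$, regarded as binary sequences on $\mathbb{Z}_m$; thus $B=N_a$ and $D=N_b$ in the notation of Lemma~\ref{corredif}. By \cite[(6)]{KS84} (the identity already used to prove Corollary~\ref{relationcorrelation}),
\[
R_f(w)=\frac{1}{2}(R_a(w)+R_b(w))+\frac{\mathrm{i}}{2}(R_{b,a}(w)-R_{a,b}(w)),\qquad 1\le w\le m-1 ,
\]
so it is enough to express the three quantities $R_a(w)+R_b(w)$, $R_{b,a}(w)$ and $R_{a,b}(w)$ through counting functions of the differences of $B$ and $D$.

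First, Lemma~\ref{corredif} applied to $a$ and to $b$ yields
\[
R_a(w)+R_b(w)=2m+4(d_B(w)+d_D(w)-|B|-|D|),
\]
where, writing $d_X$ for $d_{\chi_X}$ as in the proof of Proposition~\ref{ASDSComplements}, $d_B(w)$ (resp.\ $d_D(w)$) is the number of ordered pairs in $B\times B$ (resp.\ $D\times D$) with difference $w$ modulo $m$. Second, expanding $a(k)=1-2\chi_B(k)$ and $b(k)=1-2\chi_D(k)$ in the definition of cross-correlation, and writing $n_w$ for the multiplicity of $w$ in the multiset $B-D$, one obtains
\[
R_{b,a}(w)=m-2|B|-2|D|+4n_w ,\qquad R_{a,b}(w)=m-2|B|-2|D|+4n_{-w} ;
\]
the only point needing care is that $\sum_k\chi_D(k)\chi_B(k+w)$ counts the pairs $(x,y)\in B\times D$ with $x-y\equiv w$, whereas $\sum_k\chi_B(k)\chi_D(k+w)$ counts those with $x-y\equiv -w$ (congruences modulo $m$). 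Substituting back,
\[
R_f(w)=m+2(d_B(w)+d_D(w)-|B|-|D|)+2\mathrm{i}\,(n_w-n_{-w}) .
\]

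The equivalence now drops out. Since $m$ is odd, the real part of $R_f(w)$ above is a nonzero odd integer and the imaginary part is even; hence $|R_f(w)|=1$ if and only if $n_w=n_{-w}$ and $d_B(w)+d_D(w)-|B|-|D|\in\{-\frac{m+1}{2},-\frac{m-1}{2}\}$, and in passing this re-derives $R_f(w)=\pm 1$ (Corollary~\ref{vOQS}). Demanding $n_w=n_{-w}$ for all $w$ is precisely the symmetry of $B-D$, while demanding $d_B(w)+d_D(w)\in\{\mu,\mu+1\}$ for $1\le w\le m-1$, with $\mu=|B|+|D|-\frac{m+1}{2}$, is by the characterization in the proof of Proposition~\ref{ASDSComplements} precisely the statement that $B$ and $D$ are $2$-$\{m;|B|,|D|;\mu\}$ ASDS. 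As $f$ is an OQS exactly when $|R_f(w)|=1$ for every $w$ with $1\le w\le m-1$ (the case $w\equiv 0$ imposing nothing on either side), the two formulations agree.

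Once Lemma~\ref{corredif}, the identity \cite[(6)]{KS84} and Corollary~\ref{vOQS} are in hand the argument is essentially bookkeeping; the one place where a slip would be costly is the cross-correlation computation, where $n_w$ must stay attached to $R_{b,a}$ and $n_{-w}$ to $R_{a,b}$ --- swapping them would replace $B-D$ by $D-B$ and hide the symmetry condition.
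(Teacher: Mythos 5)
Your computation is correct and follows essentially the same route as the paper: both arguments rest on the Krone--Sarwate decomposition of $R_f(w)$ into the periodic auto- and cross-correlations of the two rows of $\phi$, together with Lemma~\ref{corredif} to convert correlations into difference counts. Your version is in fact a little more self-contained: by assembling everything into the single closed form $R_f(w)=m+2(d_B(w)+d_D(w)-|B|-|D|)+2\mathrm{i}(n_w-n_{-w})$ and using that $m$ is odd, you obtain the realness of $R_f(w)$ (Corollary~\ref{vOQS}) as a by-product of a parity argument, rather than importing it from the expansion machinery behind Theorem~\ref{OQSQOC} via Corollary~\ref{relationcorrelation}. Your bookkeeping of $n_w$ versus $n_{-w}$ agrees with the paper's counts $|Z_w|$ and $|[B\times D]_w|$, so the symmetry condition comes out the right way around.

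The one point you omit, which the paper's proof disposes of first, is that the stated parameter $\mu=|B|+|D|-\frac{m+1}{2}$ may be negative. For example, the OQS $(1,-1,1,1,1)$ gives $B=D=\{1\}\subseteq\mathbb{Z}_5$ and $\mu=-1$, and then $B$, $D$ are not literally $2$-$\{m;|B|,|D|;\mu\}$ ASDS, since $\mu$ is a number of solutions. The paper resolves this by invoking Proposition~\ref{ASDSComplements} to replace $B$ or $D$ by its complement so that $|B|+|D|\geq\frac{m+1}{2}$ (complementation also preserves the symmetry of $B-D$, since the multiplicity of $w$ in $\overline{B}-D$ is $|D|-n_w$). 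You should add a sentence to the same effect; otherwise the forward implication of your equivalence, read literally, fails in this degenerate range.
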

\begin{proof}
First we deal with a technicality.
Although possibly
$|B|+|D|<\frac{m+1}{2}$,  
by Proposition~\ref{ASDSComplements}
we can take complements if 
necessary to arrange that
$|B|+|D|\geq \frac{m+1}{2}$.

By Lemma~\ref{corredif},
\[
d_{\phi(0,-)}(w)=\frac{R_{\phi(0,-)}(w)-m}{4}
+|B|
\quad \mbox{and}\quad
d_{\phi(1,-)}(w)=\frac{R_{\phi(1,-)}(w)-m}{4}
+|D|.
\]
Put $d(w)=
d_{\phi(0,-)}(w)+d_{\phi(1,-)}(w)$.
By Corollary~\ref{relationcorrelation}, 
$f$ is an OQS if and only if, 
 firstly, for $1\leq w \leq m-1$ either 
\[
d(w)=|B|+|D|-\frac{m+1}{2} \quad
\mbox{or} 
\quad
d(w)=|B|+|D|-\frac{m-1}{2} ;
\]
secondly,
\begin{equation}\label{equ2}
R_{\phi(0,-),\phi(1,-)}(w)
   =R_{\phi(0,-),\phi(1,-)}(m-w),
\end{equation}
using that $R_{b,a}(w)=R_{a,b}(n-w)$ 
for binary sequences $a$, $b$ of 
length $n$.
 
Now define  
\[
Z_l=\{(j,j+l)\in \mathbb{Z}_m\times  
\mathbb{Z}_m  \; |\; 
\phi(0,j)=\phi(1,j+l)=-1 \}
\]
and, for $X$, $Y\subseteq \mathbb Z_m$, 
\[
[X\times Y]_{w}=
\{(x,y)\in X\times Y 
\; |\; x-y\equiv  w\mod m\}.
\]
Since $R_{\phi(0,-),\phi(1,-)}(w)=
m-2(|B|+|D|-2|Z_w|)$,
the requirement (\ref{equ2}) is equivalent 
to $|Z_w|=|Z_{m-w}|$. 
We also verify that 
$|[B\times D]_{w}|=|Z_{m-w}|$ and 
$|[D\times B]_{w}|=\allowbreak |Z_w|$.
Finally, $B-D=D-B$ if and only if 
$|[B\times D]_{w}|=|[D\times B]_{w}|$
 for $1\leq \allowbreak w 
 \leq \allowbreak m-1$.
\hfill $\Box$ 
\end{proof}
\begin{example}
The $2$-$\{9; 5, 6; 6\}$
ASDS associated to the OQS 
$(1,-1,-{\rm i},-1,{\rm i},-1,\abk 
-{\rm i}, -1,1)$ of length $9$ are
$\{ 1, 3, 4, 5, 7\}$, 
$\{1, 2, 3, 5, 6, 7 \}$.
For both OQS $(1,{\rm i},1)$ 
and $(1,-1,\abk 1,1,1)$, we must take 
complements to get the ASDS
$\{1\}$, $\{0,1,2\}\subseteq 
\mathbb Z_3$ 
and $\{1\}$, $\{0,2,3,4\}\subseteq 
\mathbb Z_5$.
\end{example}

\begin{remark}\label{NewASDSfromKnownOQS}
Table~\ref{Ex-quater-seq} yields 
$2$-$\{m;|B|,|D|;|B|+|D|-\frac{m+1}{2}\}$ 
ASDS for any prime $m\equiv 1 \mod 4$ 
or $m=(p^a+1)/2$, $p$ prime.
\end{remark}

Next we state an equivalence between 
ASDS and quasi-orthogonal cocycles.
This result follows from 
Proposition~\ref{Goba-quasiorthogonal}
and Theorems~\ref{OQSQOC}
and \ref{caracteqoasdsreplace}. 
\begin{theorem}\label{caracteqoasds}
Let $\psi=\lambda\,\prod_{j=2}^{2m-1}
{\partial_j}^{k_j}$
where $k_j\in\{0,1\}$ and
$\{\lambda, \partial_2,\ldots,
\partial_{2m-1}\}$ is the basis 
of $Z^2(G,\langle -1\rangle)$ defined 
in Section{\em ~\ref{CocyclesOverZ2m}}.
Then $\psi$ is quasi-orthogonal 
if and only if
\[
B=\{j-1 \; |\; 2\leq j\leq m, \,k_j=1 \}, 
\hspace{2.5pt}
D=\{j-m-1 \; |\;  m+1\leq j\leq 2m-1, 
\,k_j=1\}
\]
are $2$-$\{m;|B|,|D|;
|B|+|D|-\frac{m+1}{2}\}$ 
ASDS such that the multiset $B-D$ of 
differences $x-y$ modulo $m$ 
as $(x,y)$ ranges over $B\times D$
is symmetric.
\end{theorem}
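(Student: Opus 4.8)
The plan is to assemble Theorem~\ref{caracteqoasds} from machinery already in place, treating it as essentially a translation exercise. First I would fix a normalized binary $(2,m)$-array $\phi$ associated with the exponent vector $(k_2,\ldots,k_{2m-1})$, so that $\phi(g_i)=-1$ exactly when $k_i=1$ (and $\phi(g_1)=1$); here the ordering $g_1=(0,0),\ldots,g_m=(0,m-1),g_{m+1}=(1,0),\ldots,g_{2m}=(1,m-1)$ from Section~\ref{CocyclesOverZ2m} is used. Under this correspondence the set $B=\{j-1 \mid 2\leq j\leq m,\, k_j=1\}$ is precisely $\{j\in\mathbb Z_m \mid \phi(0,j)=-1\}$, and $D=\{j-m-1 \mid m+1\leq j\leq 2m-1,\, k_j=1\}$ is precisely $\{j\in\mathbb Z_m \mid \phi(1,j)=-1\}$ (the index $2m$, i.e.\ the element $(1,m-1)$, is excluded because $\{\lambda,\partial_2,\ldots,\partial_{2m-1}\}$ is the chosen basis, so $k_{2m}$ does not appear; one checks this omission is harmless, e.g.\ via the identity $\partial\phi=\partial_{2m}\prod_{i=2}^{m}\partial_i^{e_i}\prod_{i=m+1}^{2m-1}\partial_i^{1-e_i}$ recorded in the Remark after Proposition~\ref{Goba-quasiorthogonal}). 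Thus the $B$, $D$ defined from $\psi$ coincide with the $B$, $D$ attached to $\phi$ in Remark~\ref{aqbs} and Theorem~\ref{caracteqoasdsreplace}.

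Next I would chain the three equivalences. By construction $\psi=\lambda\,\partial\phi$, so Proposition~\ref{Goba-quasiorthogonal} gives: $\psi$ is quasi-orthogonal $\iff$ $\phi$ is a GOBA$(2,m)$ of type $(1,0)$. By Theorem~\ref{OQSQOC}, $\phi$ is a GOBA$(2,m)$ of type $(1,0)$ $\iff$ the quaternary sequence $f$ corresponding to $\phi$ via Remark~\ref{aqbs} is an OQS of length $m$. Finally, Theorem~\ref{caracteqoasdsreplace} says $f$ is an OQS $\iff$ $B$ and $D$ (exactly the sets above) are $2$-$\{m;|B|,|D|;|B|+|D|-\frac{m+1}{2}\}$ ASDS with $B-D$ symmetric. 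Concatenating these three biconditionals yields the statement.

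The one genuine point requiring care — and the step I expect to be the main obstacle — is the bookkeeping around the index $2m$ and the normalization. The theorem's description of $B$ and $D$ uses only the exponents $k_2,\ldots,k_{2m-1}$, whereas an arbitrary normalized array $\phi$ on $G$ has $2m-1$ free sign choices $\phi(g_2),\ldots,\phi(g_{2m})$, one of which (namely $\phi(g_{2m})=\phi(1,m-1)$) is not recorded by any $k_j$. I would resolve this by noting that changing $\phi(g_{2m})$ alters $\partial\phi$ by the coboundary $\partial_{2m}$, and that $\partial_{2m}$ is expressible in the basis $\{\lambda,\partial_2,\ldots,\partial_{2m-1}\}$; so for each exponent vector $(k_j)$ there is a unique $\psi$ of the stated form, and it equals $\lambda\,\partial\phi$ for a unique normalized $\phi$, whose sign pattern on the $(1,\cdot)$ row is then forced. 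One then verifies that with this $\phi$, the condition $\phi(1,m-1)=-1$ versus $+1$ is already encoded consistently, so that $D$ as defined really is $\{j \mid \phi(1,j)=-1\}$ up to nothing. Once this identification is pinned down, the rest is a one-line concatenation of Proposition~\ref{Goba-quasiorthogonal} and Theorems~\ref{OQSQOC} and~\ref{caracteqoasdsreplace}, exactly as the sentence preceding the theorem statement promises.
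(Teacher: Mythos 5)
Your proposal is correct and follows essentially the same route as the paper, which proves the theorem exactly by concatenating Proposition~\ref{Goba-quasiorthogonal}, Theorem~\ref{OQSQOC}, and Theorem~\ref{caracteqoasdsreplace}. The bookkeeping you worry about concerning the omitted index $2m$ (equivalently, the normalization forcing $m-1\notin D$) is the same point the paper disposes of via the identity $\partial_{2m}=\prod_{i=m+1}^{2m-1}\partial_i$ and the remark following the theorem, so your treatment matches theirs.
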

\begin{remark}
Since $\psi$ is normalized,
the ASDS in Theorem~\ref{caracteqoasds}
are `normalized' too ($0\notin B$). 
Also $m-1\not \in D$ 
because of the particular
basis of $Z^2(G,\langle -\rangle)$ 
chosen.
\end{remark}

\begin{example}
\begin{enumerate}
\item The ASDS in 
Example~\ref{exampleASDS1} 
satisfy the stipulations of
Theorem~\ref{caracteqoasds}, so
the cocycle
$\lambda\partial_2\partial_5
\partial_6\partial_7
\partial_8\partial_{10}\partial_{12} 
\in Z^2(\mathbb Z_2 \times \mathbb Z_9,
\langle -1\rangle)$ is quasi-orthogonal.

\vspace{2pt}

\item 
$B=\{1,2\}$ and $D=\{0,2\}$ are 
$2$-$\{7;2,2;0\}$ ASDS, but
 $B-D\neq D-B$. Hence
$\lambda\partial_2
\partial_3\partial_8
\partial_{10}
\in Z^2(\mathbb Z_2 \times 
\mathbb Z_7, \langle -1 \rangle)$
is not quasi-orthogonal.
Indeed, two rows in the lower half 
of $M_\psi$ sum to $4$.
\end{enumerate}
\end{example}
\begin{remark}
We define an equivalence relation $\sim$ on 
the set of GOBA($2,m$) by
$\phi \sim \phi'\Leftrightarrow \phi$
and $\phi'$ have the same first row and 
their second rows are negations of 
each other. Equivalence relations 
such as this carry over to compatible 
equivalence relations on 
sets of ASDS and OQS.
\end{remark}

We derive bounds on the size of
the ASDS in Theorem~\ref{caracteqoasds}.
\begin{corollary} 
Suppose that $B$ and $D$ are
$2$-$\{m;k,r;k+r-\frac{m+1}{2}\}$ 
ASDS where $m$ is odd,
$0\not \in B$, 
and $m-1\not \in D$. Then
\[
\frac{(m-1)^2}{2}\leq 
(k+r)m-(k^2+r^2)\leq \frac{m^2-1}{2} .
\]
\end{corollary}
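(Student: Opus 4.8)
The plan is to read off the two inequalities directly from the parameter constraint (\ref{ConstrainParameters}) together with the two given boundary conditions $0\notin B$ and $m-1\notin D$. First I would rewrite (\ref{ConstrainParameters}): with $k_1=k$, $k_2=r$, and $\mu=k+r-\tfrac{m+1}{2}$, the number $t$ of ``deficient'' differences is
\[
t=(m-1)(\mu+1)-k(k-1)-r(r-1)
 =(m-1)\Bigl(k+r-\tfrac{m-1}{2}\Bigr)-k^2-r^2+k+r.
\]
Expanding and simplifying, this collapses to $t=(k+r)m-(k^2+r^2)-\tfrac{(m-1)^2}{2}$. So the claimed lower bound $\tfrac{(m-1)^2}{2}\le (k+r)m-(k^2+r^2)$ is exactly the statement $t\ge 0$, which holds because $t$ counts a set of residues; and the upper bound $(k+r)m-(k^2+r^2)\le\tfrac{m^2-1}{2}$ is equivalent to $t\le m-1$, i.e.\ the deficient differences form a subset of $\mathbb Z_m\setminus\{0\}$, which again is immediate from the definition of ASDS. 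At this level the corollary is essentially a restatement, so the real content must be that the hypotheses $0\notin B$, $m-1\notin D$ are what make the inequalities \emph{non-vacuous} in the relevant range, or—more likely—that they are needed to know $\mu\ge 0$ and hence that the ASDS in question genuinely exist; I would flag this as the point to get right.

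The cleaner route, and the one I would actually write, is to treat the two inequalities as the assertions $0\le t\le m-1$ and verify the arithmetic identity $t=(k+r)m-(k^2+r^2)-\tfrac{(m-1)^2}{2}$ once, carefully. For the lower bound there is nothing more to do: $t\ge 0$ by definition. For the upper bound, $t=m-1$ would force every nonzero difference to be deficient, i.e.\ $d_B(w)+d_D(w)=\mu+1$ for all $w$, and one checks this is consistent only in degenerate cases; in general $t\le m-1$ because the $t$ values of $a$ with exactly $\mu$ solutions and the $m-1-t$ values with $\mu+1$ solutions partition $\mathbb Z_m\setminus\{0\}$, so $m-1-t\ge 0$. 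Both bounds therefore reduce to counting, and the corollary follows.

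The step I expect to be the main obstacle is pinning down precisely where the normalization hypotheses $0\notin B$ and $m-1\notin D$ enter. If they are genuinely used—rather than inherited cosmetically from Theorem~\ref{caracteqoasds}—then the bounds are presumably \emph{strict} or shifted at the extremes, and I would need to track how excluding $0$ from $B$ and $m-1$ from $D$ perturbs the count of solutions of $b-b'\equiv a$ and $d-d'\equiv a$ for the particular $a$ that would otherwise saturate the bound. Concretely, I would examine the two extremal cases $t=0$ and $t=m-1$ and show that under these hypotheses they can occur, so the displayed (non-strict) inequalities are sharp; this amounts to checking that the ASDS of Remark~\ref{NewASDSfromKnownOQS} realize the endpoints, which I would cite rather than reprove. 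Modulo that check, the proof is just the identity for $t$ plus the tautologies $0\le t$ and $t\le m-1$.
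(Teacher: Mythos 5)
Your argument is correct, but it is not the route the paper takes. You derive the identity $t=(k+r)m-(k^2+r^2)-\tfrac{(m-1)^2}{2}$ from the parameter relation (\ref{ConstrainParameters}) with $\mu=k+r-\tfrac{m+1}{2}$ (your algebra checks out), and then read both inequalities off as $t\ge 0$ and $m-1-t\ge 0$, which are immediate from the definition of ASDS since $t$ and $m-1-t$ count complementary subsets of $\mathbb{Z}_m\setminus\{0\}$. The paper instead works inside the cocyclic framework: it forms $\psi=\prod_{i\in B}\partial_{i+1}\prod_{i\in D}\partial_{i+m+1}$, notes that each of the $m-1$ rows indexed $2\le j\le m$ of $M_\psi$ contains $m\pm 1$ entries equal to $-1$ (so the total over these rows lies between $(m-1)^2$ and $m^2-1$), and compares with the aggregate count $2k(m-k)+2r(m-r)=2\bigl((k+r)m-k^2-r^2\bigr)$. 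The two proofs are two faces of the same counting fact, but yours is more elementary and self-contained, and it exposes something the paper's phrasing obscures: the hypotheses $0\notin B$ and $m-1\notin D$ play no role in the inequality itself (they are only needed so that the specific product of basis coboundaries in the paper's proof is well formed, and any ASDS can be translated to satisfy them without changing $m,k,r,\mu$). Your second and third paragraphs, which agonize over whether those hypotheses force strictness or sharpness, are unnecessary hedging and could be deleted; the first paragraph's computation plus the tautology $0\le t\le m-1$ is a complete proof.
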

\begin{proof}
There exists 
$\psi= \prod_{i\in B} \partial_{i+1} 
\prod_{i\in D}\partial_{i+m+1}$ 
such that the number of $-1$s in 
row $j$ of $M_\psi$ 
 for $2\leq j\leq m$
is $m \pm 1$.
Alternatively, counting in $M_\psi$ 
before row normalization reveals that
the total number of $\, -1$s in 
these rows is 
$2k(m-k)+\abk 2r(m-r)$. 
The inequalities follow by
comparing the counts.
 \hfill $\Box$
\end{proof}

Our ultimate result 
is an accompaniment to
Theorem~\ref{MatrixFormOfASDS}.
\begin{lemma}
For any nonempty subsets $B$, $D$ of 
$\mathbb Z_m$, the multiset $B-D$ 
is symmetric if and only if 
$B^c$ and $D^c$ are amicable, 
i.e., $B^c (D^c)^\top$
is symmetric.
\end{lemma}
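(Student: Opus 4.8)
The plan is to translate the combinatorial symmetry of the multiset $B-D$ into a matrix identity, using the circulant structure of $B^c$ and $D^c$. First I would recall that for a circulant $\{\pm 1\}$-matrix $X^c$ indexed by $\mathbb Z_m$ with $(i,j)$ entry $1-2\chi_X(j-i)$, the product $B^c(D^c)^\top$ is itself circulant (a product of circulants), with $(i,j)$ entry depending only on $j-i$; call this entry $c(j-i)$. A direct count, expanding $(1-2\chi_B(k-i))(1-2\chi_D(k-j))$ and summing over $k$, shows that $c(\ell) = m - 2|B| - 2|D| + 4\,|\{(x,y)\in B\times D \mid x-y \equiv \ell\bmod m\}|$. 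Thus the only term in $c(\ell)$ that varies with $\ell$ is $4$ times the multiplicity of $\ell$ in the multiset $B-D$.

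Next I would observe that a circulant matrix $M$ with $(i,j)$ entry $c(j-i)$ is symmetric if and only if $c(\ell)=c(-\ell)$ for all $\ell\in\mathbb Z_m$. Combining this with the formula for $c(\ell)$, symmetry of $B^c(D^c)^\top$ is equivalent to the statement that $\ell$ and $-\ell$ have the same multiplicity in $B-D$ for every $\ell$, which is precisely the assertion that the multiset $B-D$ is closed under negation, i.e. symmetric. This gives both directions at once. I would also note the clean alternative phrasing via the group ring: writing $\widehat B=\sum_{b\in B} z^b$, etc., in $\mathbb Z[\mathbb Z_m]$, the matrix $B^c(D^c)^\top$ corresponds to an element whose $z^\ell$-coefficient is the above $c(\ell)$, and amicability is the condition that this element is fixed by the involution $z\mapsto z^{-1}$.

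There is no real obstacle here; the proof is essentially bookkeeping, and the statement should be presented as a short remark. The one point requiring a little care is the claim that the product of two circulants indexed by the same group is again circulant — this is standard ($\mathbb Z[\mathbb Z_m]$ is commutative, and circulants form a commutative algebra), but I would state it explicitly since $D^c$ enters transposed; note $(D^c)^\top$ is the circulant with symbol $\ell\mapsto (D^c)_{\text{entry}}(-\ell)$, which is again circulant. The other mild subtlety is keeping track of which variable ($x-y$ versus $y-x$) indexes the off-diagonal, so that the final equivalence lands on $B-D$ rather than $D-B$; since the two multisets are negatives of one another, symmetry of either is equivalent to $B-D=D-B$, so this is harmless, but I would fix the convention to match Theorem~\ref{caracteqoasdsreplace} and Theorem~\ref{caracteqoasds}.
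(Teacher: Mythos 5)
Your proof is correct and follows essentially the same route as the paper: both exploit the fact that $B^c(D^c)^\top$ is circulant, reduce its symmetry to the condition that the entry indexed by $\ell$ equals the entry indexed by $-\ell$, and identify the part of that entry varying with $\ell$ as (four times) the multiplicity of $\ell$ in $B-D$. Your explicit formula $c(\ell)=m-2|B|-2|D|+4\,|[B\times D]_\ell|$ is just a slightly more streamlined version of the paper's comparison of the first rows of $B^c(D^c)^\top$ and $D^c(B^c)^\top$.
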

\begin{proof}
Note that $B^c(D^c)^\top$ and 
$D^c(B^c)^\top$ are circulant.
If $u=(u_0,u_1,\ldots, \abk u_{m-1})$ 
and $v=(v_0,v_1,\ldots,v_{m-1})$ are 
the first rows of $B^c(D^c)^\top$ 
and $D^c(B^c)^\top$, then
\[
u_0=v_0,\,\, u_1=v_{m-1},\, \ldots,\,\, u_i=v_{m-i},\,\ldots,\,\,  u_{m-1}=v_1.
\]
Consequently $B^c(D^c)^\top=
D^c(B^c)^\top$ if and only if
\[
u_1=u_{m-1}, \
u_2=u_{m-2}, \
\ldots , \
u_{\frac{m-1}{2}}=u_{\frac{m+1}{2}} .
\]

Let $(b_0,b_1,\ldots,b_{m-1})$ and
$(d_0,d_1,\ldots,d_{m-1})$ be the 
respective first rows of $B^c$ and 
$D^c$. Then
\begin{eqnarray*}
& & u_i =b_ 0d_{[-i]_m}+b_1d_{[1-i]_m}
+ \cdots  + b_{m-1}d_{[-1-i]_m} \\
& & u_{m-i}=d_ 0b_{[-i]_m}+d_1b_{[1-i]_m}
+ \cdots + d_{m-1}b_{[-1-i]_m}.
\end{eqnarray*}
We check that
$u_i = u_{m-i}$ if and
only if the number of summands
$b_jd_k$ in $u_i$ with 
$b_j=d_k = -1$ is equal to the
number of summands $b_{j'}d_{k'}$ in
$u_{m-i}$ with
$b_{j'}=\abk d_{k'} = -1$.
Since $j-k\equiv i \equiv k'-j' \mod m$,
the proof is complete.
\hfill $\Box$
\end{proof}

In conclusion, and with reference to 
Remark~\ref{NewASDSfromKnownOQS} and the 
existence problem for quasi-orthogonal cocycles, 
we pose  the open problem 
of constructing new OQS from 
new ASDS (cf.~the construction 
in \cite{ADHKM01} of 
optimal binary sequences from almost 
difference sets).

\subsubsection*{Acknowledgments.} 
The first author thanks Kristeen Cheng for 
reading the manuscript, and V\'ictor \'Alvarez 
for assistance with computations.
This research was partially supported by project 
FQM-016 funded by JJAA (Spain).

\end{document}